\newtheorem{theorem}{Theorem}[section]
\newtheorem{lemma}[theorem]{Lemma}
\newtheorem{cor}[theorem]{Corollary}
\theoremstyle{definition}
\newtheorem{de}[theorem]{Definition}
\newtheorem{example}[theorem]{Example}
\theoremstyle{definition}
\newtheorem{remark}[theorem]{Remark}
\theoremstyle{proposition}
\newtheorem{proposition}[theorem]{Proposition}
\numberwithin{equation}{section}
\def\eq#1{(\ref{#1})}
\def\o{\Omega}
\def\RR{{\mathbb R} }
\def\ri{\rightarrow}
\def\bom{\bar\Omega}
\def\om{\Omega}
\def\R{\mathbb{R}}
\def\sub{\underline}
\def\super{\overline}
\begin{document}

\title[Entire Large Solutions for Semilinear Elliptic Equations ]
{Entire Large Solutions for Semilinear Elliptic Equations }

\author[L. Dupaigne]{Louis Dupaigne}

\address{LAMFA, UMR CNRS 6140, Universit\'e de Picardie Jules Verne, 33 rue St Leu, 80039, Amiens Cedex, France}

\email{louis.dupaigne@math.cnrs.fr}

\author[M. Ghergu]{Marius Ghergu}

\address{School of Mathematical Sciences, University College Dublin, Belfield, Dublin 4, Ireland}

\email{marius.ghergu@ucd.ie}

\author[O. Goubet]{Olivier Goubet}

\address{LAMFA, UMR CNRS 7352, Universit\'e de Picardie Jules Verne, 33 rue St Leu, 80039, Amiens Cedex, France}

\email{olivier.goubet@u-picardie.fr}

\author[G. Warnault]{Guillaume Warnault}

\address{LMA, UMR CNRS 5142, Universit\'e de Pau et des Pays de l'Adour, Avenue de l'Universit\'e, BP 1155, 64013 Pau Cedex, France}

\email{guillaume.warnault@univ-pau.fr}

\subjclass[2010]{Primary 35B40; 35J75; Secondary 35J25; 35B51}


\keywords{Entire (large) solution; radial symmetry; asymptotic behavior, uniqueness}

\begin{abstract}
We analyze the semilinear elliptic equation $\Delta u=\rho(x) f(u)$, $u>0$ in $\R^D$ $(D\ge3)$,
with a particular emphasis put on the qualitative study of entire large solutions, that is, solutions $u$ such that $\lim_{\vert x\vert\to+\infty}u(x)=+\infty$.
Assuming that $f$ satisfies the Keller-Osserman growth assumption and
that $\rho$ decays at infinity in a suitable sense, we prove the existence of entire large solutions.
We then discuss the more delicate questions of asymptotic behavior at infinity, uniqueness and symmetry of solutions.
\end{abstract}

\maketitle

\tableofcontents

\section{Introduction}

We investigate the semilinear elliptic equation
\begin{equation}\label{els}
\Delta u=\rho(x) f(u)\,,\;\;
u>0 \qquad\text{in $\R^D$ $(D\ge3)$,}
\end{equation}
where $\rho, f$ are {\it positive} quantities, satisfying general growth assumptions to be specified in the following.
The above equation appears naturally in a number of interesting contexts which we recall now.

The link between semilinear elliptic equations and conformal geometry has been known for a long time (see e.g. the seminal work of H. Yamabe \cite{yamabe}, as well as as the lecture notes of E. Hebey \cite{hebey}): when $f(u)=u^\frac{D+2}{D-2}$, the solvability of \eqref{els} is equivalent to the existence of a conformal metric on the Euclidean space $\R^D$, with prescribed scalar curvature $K=-\rho$. Up to a dilation, the corresponding conformal factor is the quantity $\varphi=u^{\frac4{D-2}}$. For the study of this equation, see e.g. W.-M. Ni \cite{ni}, Y. Li and W.-M. Ni \cite{li-ni}, and K.-S. Cheng and W.-M. Ni \cite{cheng-ni}.

It is also known that properties of random systems of branching particles are related to semilinear elliptic equations of the form \eqref{els}, when $f(u)=u^p$, $1<p\le2$. See  the pioneering work \cite{dynkin} of E.B. Dynkin, as well as the review paper of J.F. Le Gall \cite{legall}. When $\rho$ is bounded, the parabolic version of \eqref{els} is the log-Laplace equation of a measure-valued branching process $(X_{t})$, known as a catalytic super-Brownian motion. Several properties of $(X_{t})$ can be derived from the study of \eqref{els}. For example, the process has compact global support (that is, the closure of the union of the supports of all measures $X_{t}$, $t\ge0$ is almost surely compact) if and only if \eqref{els} fails to be solvable.
See J. Engl\"ander and R. G. Pinsky \cite{ep} and Y.-X. Ren \cite{ren}.

From the PDE perspective, the classification of solutions to \eqref{els} (in particular, questions of existence, uniqueness, radial symmetry, and asymptotic behavior at infinity) is of interest, also because it can provide information, such as {\it a priori} estimates on solutions to the same equation, posed in an arbitrary proper domain of Euclidean space. See the seminal paper of J.B.  Keller \cite{K} and the introduction of A. Olofsson in \cite{olofsson} for the case $\rho\equiv1$,  as well as S. Taliaferro's work \cite{taliaferro2, taliaferro} and the references therein, for more general situations.

Finally, from the point of view of exponential asymptotics, {\it entire large solutions} (ELS, for short), that is, solutions such that
$$
\lim_{\vert x\vert\to+\infty}u(x)=+\infty,
$$
provide an interesting example where the function $u$ is no better in general than the Borel sum of a factorially divergent series, while the inverse mapping $r=r(u)$ of a radial ELS $u=u(r)$ turns out to be, at least in some cases,  the sum of a convergent but {\it abstract} asymptotic expansion. To illustrate this, consider the case where $\rho(x)=\vert x\vert^{2-2D}$ and $f(u)=u(\ln u)^4$. If $u=u(r)$ is a radial ELS, then $v(t)=u(r)$ with $t=r^{2-D}$ solves the autonomous ODE
$$
v'' = \frac{1}{(D-2)^2}f(v)
$$
and blows up at $t=0$. A formal calculation leads to the asymptotic expansion
$$
v\sim e^{1/t}\sum a_{k}t^k,
$$
where the coefficients $a_{k}$ exhibit factorial divergence. It can be proved that the above series is Borel summable. Furthermore, if $\tilde v$ denotes its Borel sum, $v-\tilde v$ is exponentially small. Instead, if one tries to expand $t$ as a function of $v$, one recovers a {\it convergent} power series in the unknown $z=(\ln v)^{-1}$.

A remarkable fact is that such a convergent asymptotic expansion can be obtained for any nonlinearity $f$ satisfying the Keller-Osserman growth condition (see $(KO)$ below). Each term in the expansion is ``abstract'' i.e. computed in terms of iterated antiderivatives of $f$. See the work \cite{cd} by O. Costin and one of the authors for a similar situation.

\

\noindent Now, let us turn to the structural assumptions made on the data $f$ and $\rho$.
\begin{itemize}
\item
First, we restrict our attention to the case where $\rho, f>0$: it is well known that the analysis of the PDE \eqref{els} is radically different under different sign assumptions on the data and we shall not elaborate on this restriction, apart from saying that, for some of our results, it suffices to assume that $f$ is positive only at infinity, in the following sense (due to H. Brezis, see \cite{cd}):
\begin{equation*} 
\exists\; a\in\R^+\quad\text{s.t.}\quad f(a)>0\quad\text{and} \quad f(t)\ge0\quad\text{for $t>a$},
\eqno{(P_{f})}
\end{equation*}
and that $\rho$ can vanish only in the following sense (due to A.V. Lair, see \cite{lair}):
\begin{center}
$\rho\ge0$ and for all $x_{0}\in\R^D$ such that $\rho(x_{0})=0$, there exists a bounded domain $\Omega\subset\R^D$ containing $x_{0}$ such that $\rho\vert_{\partial\Omega}>0$.\hfill{$(P_{\rho})$}
\end{center}
\item Next, we assume that $f$ is superlinear in the sense that
$$\int^{+\infty}
\frac{ds}{\sqrt{F(s)}}<+\infty,
\eqno(KO)$$
where $F(s)=\int_a^s f(t)dt$. This assumption, first introduced by J.B. Keller \cite{K} and R. Osserman \cite{O}, is structural: take for example the simpler case where $\rho\equiv1$. If $f\ge0$ satisfies $(KO)$, then \eqref{els} has no nontrivial solution (see \cite{K,O,d1}). In turn, if $f~\ge~0$ fails to fulfill $(KO)$, problem \eqref{els} has infinitely many (radial, entire large) solutions (see \cite{K,O}). Furthermore, at least in the specific case where $f(u)=u^q$ for certain values of $q\in(0,1]$ (and so, again, $(KO)$ fails), the equation also admits nonradial ELS (see \cite{taliaferro, bvg}). So, at present, $(KO)$ seems to be a necessary assumption in order to classify all solutions to the equation.

\item Whenever the Keller-Osserman condition $(KO)$ holds, it is natural to request that $\rho$ decays fast at infinity, in the sense that there exists a solution to
\begin{equation} \label{condH}
\left\{
\begin{aligned}
-\Delta U&=\rho(x)\qquad\text{in $\R^D$, $D\ge3$},\\
\lim_{\vert x\vert\to+\infty}U(x)&=0.
\end{aligned}
\right.
\end{equation}
Using the results of Appendix A in H. Brezis, S. Kamin \cite{bk}, the solvability of \eqref{condH} is equivalent to
$$
\lim_{\vert x\vert\to+\infty}\int_{\R^D}\vert x-y\vert^{2-D}\rho(y)\;dy=0.
\eqno(H_\rho)
$$
When $\rho$ is radial, this simplifies to
\begin{equation*}
\int_0^{+\infty} r\rho(r)dr<+\infty.
\end{equation*}
As we shall see, assumptions $(KO)$ and $(H_\rho)$ turn out to be sufficient for the existence of an ELS to \eqref{els} (see also D. Ye and F. Zhou \cite{yz, dong}, for a proof under the additional assumption that $f$ is increasing). In fact, if e.g. $f(u)=u^p$, $p>1$, and $\rho$ is radial, $(H_\rho)$ is also necessary for the existence of an ELS as shown in \cite{lair, taliaferro}.
\item Finally, to avoid technicalities, we assume that $f$ and $\rho$ are $C^1$ regular, and that $f(0)=0$.
\end{itemize}

\section{Main results}
We are now in a position to state our main results. We begin with the existence theory.
\begin{theorem}\label{existence} {\rm (Existence of bounded and large solutions)}  Assume that $f\not\equiv0$ is a $C^1$ function such that $f(0)=0$, $f(t)>0$ for $t>0$,  and $(KO)$ holds. Assume that $\rho>0$ is a $C^1$ function  satisfying $(H_\rho)$. Then, for every $\beta\in(0,+\infty]$, there exists a minimal solution to \eqref{els} such that
\begin{equation} \label{limitbeta}
\lim_{\vert x\vert\to+\infty}u(x)=\beta.
\end{equation}
\end{theorem}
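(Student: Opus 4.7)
My strategy is to exhaust $\R^D$ by balls $B_R$, solve a Dirichlet problem on each, and pass to the limit, treating $\beta<+\infty$ and $\beta=+\infty$ separately: the bounded case directly by a sub/super-solution argument on each ball combined with the barrier supplied by $(H_\rho)$, and the large-solution case by letting $\beta\to+\infty$ in the family of bounded solutions, with $(KO)$ keeping the limit finite.

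\textit{Case $\beta<+\infty$.} On each $B_R$ the constants $\underline v\equiv 0$ and $\overline v\equiv\beta$ are an ordered pair of sub- and super-solutions of the Dirichlet problem $\Delta v=\rho f(v)$ in $B_R$, $v=\beta$ on $\partial B_R$, because $f(0)=0$ and $f(\beta)>0$. A classical monotone iteration (started from $0$, with a shift $\lambda$ large enough that $s\mapsto\lambda s-\rho(x)f(s)$ is increasing on $[0,\beta]$ for every $x\in\overline B_R$) yields a minimal solution $u_R$ with $0\le u_R\le \beta$. A nested comparison of the same iterates on $B_R$ versus $B_{R'}$ for $R<R'$ shows that $R\mapsto u_R(x)$ is nonincreasing, and interior elliptic estimates upgrade pointwise convergence to $C^2_{\mathrm{loc}}$, so $u_R\to u_\beta$ solves \eqref{els} in $\R^D$. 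To identify the behaviour at infinity, set $M_\beta:=\max_{[0,\beta]}f$: then $w_R:=\beta-u_R$ is nonnegative, vanishes on $\partial B_R$ and satisfies $-\Delta w_R\le M_\beta\rho=-M_\beta\Delta U$, so $w_R-M_\beta U$ is a Laplacian sub-solution with $w_R-M_\beta U\le 0$ on $\partial B_R$; the maximum principle gives $w_R\le M_\beta U$ in $B_R$. Passing to the limit yields $0\le \beta-u_\beta\le M_\beta U$ in $\R^D$, and $U(x)\to 0$ at infinity via $(H_\rho)$ forces $u_\beta(x)\to\beta$. Strict positivity $u_\beta>0$ then follows from the strong minimum principle applied after linearizing the equation as $\Delta u_\beta=q(x)u_\beta$: an interior zero would force $u_\beta\equiv 0$, contradicting the limit $\beta>0$ at infinity.

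\textit{Case $\beta=+\infty$.} The same iteration comparison gives $u_{\beta_1}\le u_{\beta_2}$ whenever $\beta_1<\beta_2$, so I set $u_\infty:=\sup_{\beta<+\infty}u_\beta$. The critical step is local boundedness of $u_\infty$, which is where $(KO)$ enters: for each fixed $R_0>0$, the classical existence theorem for boundary blow-up solutions on smooth bounded domains, applied with the positive constant weight $\min_{\overline B_{R_0+1}}\rho>0$ and the superlinear $f$, produces $\Phi$ satisfying $\Delta\Phi=\rho f(\Phi)$ in $B_{R_0+1}$ with $\Phi(x)\to +\infty$ as $x\to\partial B_{R_0+1}$. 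Comparison on $B_{R_0+1}$ gives $u_\beta\le\Phi$ for every finite $\beta$, so $u_\infty\le\Phi$ is bounded on $\overline B_{R_0}$ uniformly in $\beta$. Monotone convergence and elliptic regularity upgrade the limit to $C^2_{\mathrm{loc}}$, yielding a classical solution of \eqref{els}. Finally, $u_\infty\ge u_\beta$ and $u_\beta(x)\to\beta$ at infinity force $\liminf_{|x|\to\infty}u_\infty(x)\ge\beta$ for every finite $\beta$, hence $u_\infty(x)\to+\infty$.

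\textit{Main obstacles.} The hardest step is the construction of the Keller-Osserman blow-up barrier $\Phi$ for a merely $C^1$ $f$ satisfying only $(KO)$ (not monotonicity) and a variable positive weight; I would handle this by reducing to the constant-weight case via the positive lower bound on $\rho$ on $\overline B_{R_0+1}$ and invoking the classical boundary blow-up existence result. A subtler point is that, since $f$ is not assumed monotone, the various comparison arguments used throughout (monotonicity of $u_R$ in $R$, of $u_\beta$ in $\beta$, and minimality of the limits) must be derived from the sub/super-solution iteration with shifted nonlinearity rather than from a direct linear maximum principle.
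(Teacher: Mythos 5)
Your construction of a solution with the prescribed limit is essentially sound, and it takes a mildly different route from the paper: instead of the paper's implicit global subsolution $w_\beta$ (built from an increasing majorant of $f$ via \eqref{beta1}), you iterate from the subsolution $0$ on balls with boundary datum $\beta$ and recover the behaviour at infinity from the linear upper barrier $\beta-u_R\le M_\beta U$, then obtain the ELS as the monotone limit in $\beta$, with local bounds from boundary blow-up barriers. However, the theorem claims more than existence of a solution satisfying \eqref{limitbeta}: it claims existence of a \emph{minimal} solution in the sense of the definition following the statement, i.e.\ one lying below every positive supersolution $\overline u$ of \eqref{els} with $\liminf_{|x|\to\infty}\overline u\ge\beta$. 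This part --- singled out in the paper as one of the new contributions, in particular for $\beta=+\infty$ --- is entirely absent from your argument, and it is the step for which the paper's choice of subsolution is really designed.

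The omission is not routine to repair within your scheme. Since $f$ is not assumed monotone, you cannot compare $u_\beta$ (or its ball approximations) with an arbitrary supersolution by the maximum principle; moreover, on $\partial B_R$ such a supersolution is only known to satisfy $\liminf_{|x|\to\infty}\overline u\ge\beta$, so it may lie strictly below your boundary datum $\beta$ for every $R$, and you cannot regain room by replacing $\overline u$ with $\overline u-\varepsilon$, because this need not remain a supersolution when $f$ is non-monotone. The paper's device is precisely that $w_\beta$ satisfies the stronger inequality $\Delta w_\beta\ge\rho\,\overline f(w_\beta)$ for an \emph{increasing} $\overline f\ge f$, so any supersolution of \eqref{els}--\eqref{limitbeta} dominates $w_\beta$ by the maximum principle for the monotone nonlinearity $\overline f$; minimality of the ball solutions relative to $w_\beta$ (Proposition A.1) then passes to the limit. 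You would need to import this device (or an equivalent comparison) to prove minimality; as written, you have only proved existence of some solution with limit $\beta$. A minor further point: the blow-up barrier you invoke for the local bounds when $\beta=+\infty$ must come from a result valid for merely $C^1$, non-monotone $f$ under $(KO)$, such as Theorem 1.3 of \cite{ddgr}, rather than the classical Keller--Osserman construction, which assumes $f$ nondecreasing.
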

In the above theorem, we used the following definition.
\begin{de}For every $\beta\in(0,+\infty]$,
$u$ is the minimal solution of \eqref{els} satisfying \eqref{limitbeta}, if for any supersolution  $\super u>0$ of \eqref{els} such that
$$
\liminf_{\vert x\vert\to+\infty}\super u(x)\ge\beta,
$$
we have
$$0<u\le\super u \qquad\text{in $\R^D$}.$$
\end{de}

\begin{remark}\label{remark existence} Theorem \ref{existence} remains valid under the weaker sign assumption $(P_{\rho})$ on $\rho$. Also, under the weaker sign condition $(P_{f})$ on $f$,  Theorem \ref{existence} remains valid for all $\beta\in[a,+\infty]$ where the constant $a$ is defined in $(P_{f})$.
\end{remark}

\begin{remark}
The existence of bounded solutions has been investigated by many authors. See in particular \cite{yz}, where nonlinearities failing the $(KO)$ condition are also considered. In the same paper, the authors construct large solutions (i.e. solutions satisfying \eqref{limitbeta} with $\beta=+\infty$) under the $(KO)$ condition and under the additional assumptions that $\rho$ is positive everywhere, and that $f$ is nonnegative and nondecreasing. By Theorem \ref{existence} and Remark \ref{remark existence}, the positivity assumptions can be relaxed, while the monotonicity assumption can be simply removed. In fact, we believe (and give evidence later on) that $(KO)$ is the good assumption to classify all solutions to our semilinear problem, without assuming that
$f$ is nondecreasing. In addition, the existence of a {\rm minimal}  solution satisfying \eq{limitbeta} (in particular the existence of a minimal ELS) is new.
\end{remark}

Our next observation is that all bounded solutions to \eqref{els} must have a limit at infinity.
\begin{theorem}\label{prop limit}{\rm  (Any bounded solution has a limit)} Make the same assumptions as in Theorem \ref{existence}. Let $u$ be a bounded solution of \eqref{els}. Then, \eqref{limitbeta} holds for some $\beta\in(0,+\infty)$.
\end{theorem}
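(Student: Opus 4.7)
The plan is to reduce the statement to Liouville's theorem for harmonic functions by writing $u$ as the sum of a harmonic function and a Newtonian potential that vanishes at infinity.

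First I would use the boundedness of $u$ to control the source term: if $0<u\le M$ throughout $\R^D$, then $f(u)$ is bounded by $C:=\max_{[0,M]}f$ (recall $f\in C^1$), so $0\le \rho(x)f(u(x))\le C\rho(x)$. The hypothesis $(H_\rho)$ then allows me to define the Newtonian potential
\[
w(x):=\frac{1}{(D-2)\omega_{D-1}}\int_{\R^D}\vert x-y\vert^{2-D}\rho(y)f(u(y))\,dy,
\]
and Brezis--Kamin (Appendix A of \cite{bk}), together with the pointwise domination $\rho f(u)\le C\rho$, guarantees that $w$ is well defined, nonnegative, bounded, satisfies $-\Delta w=\rho f(u)$ in $\R^D$, and obeys $\lim_{\vert x\vert\to+\infty}w(x)=0$.

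Next I would set $h:=u+w$. Since $\Delta u=\rho f(u)=-\Delta w$, the function $h$ is harmonic on $\R^D$. It is also bounded, being the sum of two bounded functions, so Liouville's theorem forces $h\equiv\beta$ for some constant $\beta\in\R$. Consequently
\[
u(x)=\beta-w(x)\xrightarrow[\vert x\vert\to+\infty]{}\beta.
\]
Finally, positivity: for any fixed $x_0$ one has $\beta=u(x_0)+w(x_0)\ge u(x_0)>0$, so $\beta\in(0,+\infty)$ as required, and in fact $\beta\le M<+\infty$.

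There is no real obstacle here beyond checking that the convolution with the Newtonian kernel is justified; the only delicate point is ensuring $w\to0$ at infinity, but since $\rho f(u)$ is dominated by a multiple of $\rho$, this follows directly from $(H_\rho)$ via the equivalence between $(H_\rho)$ and the solvability of \eqref{condH} recalled in the introduction. Everything else is Liouville plus bookkeeping.
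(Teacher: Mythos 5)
Your proof is correct and follows essentially the same route as the paper: write $u$ plus the Newtonian potential $w$ of $\rho f(u)$ as a bounded harmonic function, apply Liouville, and deduce the limit from $w\to 0$ (obtained by dominating $\rho f(u)$ by $C\rho$ and invoking $(H_\rho)$). The only minor difference is the final positivity step: you observe directly that $\beta=u(x_0)+w(x_0)>0$ because $w\ge 0$, whereas the paper instead appeals to the monotonicity of spherical averages of the subharmonic function $u$; both are valid, and yours is a touch more direct.
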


\begin{remark}
The above theorem is essentially known:  see in particular \cite{yz} for the case where $f$ is nondecreasing.
\end{remark}

Under some mild (but technical) assumptions on $\rho$ and $f$ a similar result holds for unbounded solutions. More precisely we have:
\begin{theorem}\label{tunbounded}{\rm  (Any unbounded solution is an ELS)}
Make the same assumptions as in Theorem \ref{existence}. Assume in addition that 
\begin{enumerate}
\item[(i)] there exist $c\in (0,1)$ and $\alpha>2$ such that
\begin{equation}\label{rhoc}
c|x|^{-\alpha}\leq \rho(x)\leq \frac{1}{c}|x|^{-\alpha}\quad\mbox{for all }|x|>1;
\end{equation}
\item[(ii)] there exists $M>0$ such that the mapping $u\longmapsto f(u)/u$ is nondecreasing in $I=[M,+\infty)$ and there exists $C>0$ such that
\begin{equation}\label{fiddgr}
\frac{f(u)}{u}\leq \frac{C}{\Phi^2(u)}\quad \mbox{ for all $u\in I$,}
\end{equation}

where
\begin{equation}\label{fiddgr2}
\Phi(u)=\int_u^{+\infty}\frac{dt}{\sqrt{F(t)-F(u)}}.
\end{equation}
\end{enumerate}

Let $u$ be an unbounded solution of \eqref{els}. Then, \eqref{limitbeta} holds for $\beta=+\infty$.
\end{theorem}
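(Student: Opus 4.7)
My plan is as follows. Assume $u$ is an unbounded solution of \eqref{els}; I aim to show $u(x)\to+\infty$ as $|x|\to+\infty$. The strategy is first to show that the spherical mean of $u$ diverges, and then to promote this to a pointwise limit via a Harnack chain over spheres.

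\textbf{Step 1: the spherical mean tends to infinity.} Since $\Delta u=\rho f(u)\ge 0$, $u$ is subharmonic, so the spherical mean $\bar u(r):=|S^{D-1}|^{-1}\int_{S^{D-1}}u(r\omega)\,d\sigma(\omega)$ is nondecreasing in $r$. If $\bar u$ were bounded by some constant $L$, the sub-mean-value inequality applied on $B_{|x|}(x)\subset B_{2|x|}(0)$ would give $u(x)\le 2^D L$ for every $x$, contradicting the unboundedness of $u$. Hence $\bar u(r)\to+\infty$.

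\textbf{Step 2: a Keller--Osserman upper bound.} For $|x_0|=R$ large and $y\in B_{R/2}(x_0)$, assumption (i) gives $\rho(y)\ge c' R^{-\alpha}$. Thus $u$ is a subsolution of $\Delta w=c' R^{-\alpha}f(w)$ on $B_{R/2}(x_0)$. Comparing $u$ with the radial large solution of this equation on the same ball (which blows up on the boundary) — a comparison that is valid because $f(w)/w$ is nondecreasing for $w$ large by assumption (ii) — one obtains
\[
u(y)\ \le\ C\,\Phi^{-1}\!\bigl(c\,R^{(2-\alpha)/2}\bigr)\ =:\ M(R)\qquad\text{for all }y\text{ with }|y|\sim R.
\]
In particular, since $\Phi$ is decreasing, $\Phi(u(y))\ge c R^{(2-\alpha)/2}$.

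\textbf{Step 3: uniform Harnack and chaining.} Write the equation in Schr\"odinger form $\Delta u=V(x)u$ with $V(x)=\rho(x)f(u(x))/u(x)$. When $u(y)\ge M$, condition (ii) gives $f(u(y))/u(y)\le C/\Phi^2(u(y))$; combined with $\rho(y)\le R^{-\alpha}/c$ and the lower bound on $\Phi(u(y))$ from Step 2,
\[
V(y)\ \le\ \frac{1}{c}R^{-\alpha}\cdot\frac{C}{(c R^{(2-\alpha)/2})^2}\ \le\ C''\,R^{-2}\qquad\text{on }B_{R/4}(x_0),
\]
(the regime $u(y)<M$ giving a bounded contribution absorbed for $R$ large, since $M(R)\to+\infty$). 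Rescaling $\tilde u(\tilde x):=u(x_0+R\tilde x)$, the rescaled potential $\tilde V=R^2 V$ is uniformly bounded by $C''$ independently of $R$ and $x_0$, and the classical Harnack inequality for positive solutions of $\Delta\tilde u-\tilde V\tilde u=0$ with bounded potential yields a constant $C_H=C_H(C'',D)$ such that
\[
\sup_{B_{R/8}(x_0)}u\ \le\ C_H\inf_{B_{R/8}(x_0)}u.
\]
Covering the sphere $\partial B_R$ by $N=N(D)$ balls $B_{R/16}(x_i)$, $x_i\in\partial B_R$, with $N$ independent of $R$ by scaling, and chaining the above Harnack inequality across adjacent overlapping balls gives
\[
\max_{\partial B_R}u\ \le\ C_H^{\,N}\,\min_{\partial B_R}u.
\]
Together with $\bar u(R)\le\max_{\partial B_R}u\to+\infty$ from Step 1, this forces $\min_{\partial B_R}u\to+\infty$, i.e.\ $u(x)\to+\infty$ as $|x|\to+\infty$.

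\textbf{Main obstacle.} The delicate point is Step 2: the precise exponent $(2-\alpha)/2$ in the Keller--Osserman upper bound is essential, as it is exactly what makes, via (ii), the Harnack potential $V$ subcritical at scale $R^{-2}$. This exponent reflects the balance between the decay rate $\alpha>2$ of $\rho$ and the blow-up rate $\Phi^{-1}$ of $f$. Once the potential is controlled at this critical scale, the Harnack chain on the sphere is a routine scaling argument; the \emph{a priori} control on the potential is really where conditions (i) and (ii) do their work.
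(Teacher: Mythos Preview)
Your overall strategy coincides with the paper's---a Keller--Osserman upper bound on $u$, then the Schr\"odinger potential $V=\rho f(u)/u$ is $O(|x|^{-2})$, then a scale-invariant Harnack inequality on spheres, then subharmonicity finishes---and your Steps 1, 3, 4 are essentially the paper's argument. The gap is in Step~2. From the local comparison you assert $u(y)\le C\,\Phi^{-1}\bigl(cR^{(2-\alpha)/2}\bigr)$ and then deduce $\Phi(u(y))\ge cR^{(2-\alpha)/2}$; but the second claim does not follow from the first when $C>1$: that would require a doubling-type inequality $\Phi(Cs)\ge c'\Phi(s)$, which is nowhere assumed, and without it the $R^{-2}$ bound on $V$ in Step~3 collapses. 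Nor is the stated upper bound on the center value of the local large solution actually justified: the easy energy estimate $(w')^2\le 2\lambda\bigl(F(w)-F(w(0))\bigr)$ for the radial large solution of $\Delta w=\lambda f(w)$ on $B_1$ yields only $\Phi(w(0))\le\sqrt{2\lambda}$, i.e.\ a \emph{lower} bound on $w(0)$, not the upper bound you need. (Your parenthetical justification---that the comparison holds because $f(w)/w$ is nondecreasing---is also misplaced; that monotonicity is used to control $V$, not to compare $u$ with the large solution.)

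The paper obtains the constant-free bound by a different device. Using the lower bound in (i), it replaces $\rho$ by a radial nonincreasing $\bar\rho\le\rho$ with $\bar\rho(r)=cr^{-\alpha}$ for $r\ge1$, so that $u$ is a subsolution of $\Delta v=\bar\rho(|x|)f(v)$; a sandwiching lemma (Gidas--Ni--Nirenberg symmetry plus monotone iteration) then produces a \emph{global} radial solution $v\ge u$ of that equation. One then works directly with the ODE $v''+\tfrac{D-1}{r}v'=\bar\rho f(v)$: multiplying by $rv'$, integrating over $[r,2r]$, and using that $r^{D-1}v'$ is nondecreasing gives $cs^{-\alpha/2}\le v'(s)/\sqrt{F(v(s))-F(v(r))}$ for $r\le s\le 2r$, and integrating in $s$ yields $cr^{1-\alpha/2}\le\Phi(v(r))$ with no stray multiplicative constant. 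Hence $u(x)\le v(|x|)\le\Phi^{-1}\bigl(c|x|^{1-\alpha/2}\bigr)$, and from there your Steps~3--4 go through verbatim.
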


\begin{remark}\label{rko}
(i) Since $f$ satisfies $(KO)$ it is easily seen that $\Phi$  is well defined. Furthermore, since $f$ is increasing in $I$, $\Phi$ is decreasing and bijective in $I$(see Lemma \ref{propPhi} below).

(ii) Conditions \eq{rhoc} and \eq{fiddgr} are motivated by the results in \cite{cli} and \cite{cheng-ni}. Inequality \eq{fiddgr} is satisfied by nonlinearities $f$ with either power type or exponential growth. Indeed if $f(u)\eqsim u^p$, $p>1$, then $\Phi(u)\eqsim u^{(1-p)/2}$ and if $f(u)\eqsim e^u$ then $\Phi(u)\eqsim e^{-u/2}$, so in both cases \eq{fiddgr} holds.
\end{remark}

Next, we point out that for a fixed $\beta\in(0,+\infty]$ there may be many solutions of \eqref{els} that satisfy \eqref{limitbeta}. In particular, there is in general no maximal ELS:

\begin{remark}\label{corsandwich0}
Assume that $\rho>0$ is a $C^1$ function satisfying $(H_\rho)$. Assume that $f$ is a $C^1$ function, satisfying $(P_{f})$ and  $(KO)$.    Assume in addition that $f$ vanishes infinitely many times near infinity, i.e. there exists a sequence $\{t_k\}\subset\R^+$ such that
$f(t_k)=0$ for all $k\geq 1$ and { $\lim_{k\ri +\infty}t_k=+\infty$.}
Then, \eq{els} has infinitely many ELS but no maximal ELS.

Indeed, let $f_k(t)=f(t+t_k)$, $t\geq 0$, $k\geq 1$. Then, by Remark \ref{remark existence},  there exists an ELS $v_k>0$ of $\Delta v_k=\rho(x)f_k(v_k)$ in $\RR^D$. Set  $u_k=v_k+t_k$. Then, $u_k$ is an ELS of \eq{els} and $u_k\geq t_k$. Since $\{t_k\}$ is unbounded, infinitely many $u_{k}$'s are distinct, and if there existed a maximal ELS of \eq{els}, say $V$, then we would have $V\ge u_{k}\ge t_{k}$. Letting $k\to+\infty$ this yields a contradiction. \end{remark}



\begin{example}\label{example}
The nonlinearity $f(u)=u^2(1+\cos u)$ satisfies all the requested assumptions (see the work \cite{ddgr} by S. Dumont, V. Radulescu and two of the authors for the validity of the Keller-Osserman condition $(KO)$ in this specific case).
\end{example}

\medskip

In contrast to the above result, when $f$ is nondecreasing and $\beta<+\infty$, it easily follows from the maximum principle that the solution to \eqref{els}-\eqref{limitbeta} is unique. Does this remain true for ELS?

We deal first with the case where $\rho(x)= \vert x\vert^{-\alpha}$ for large $\vert x \vert$.
\begin{theorem}\label{uniq} Make the same assumptions as in Theorem \ref{existence}.
Suppose also that 
for some $\alpha>2$,
\begin{equation}\label{cond}
\rho(x)= \vert x\vert^{-\alpha} \mbox{ for } \vert x\vert\geq 1.
\end{equation}
Then, given two ELS $u_{1},u_{2}$, there holds
$$
\lim_{\vert x\vert\to+\infty} [u_{1}(x)-u_{2}(x)] = 0.
$$
\end{theorem}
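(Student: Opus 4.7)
The overall plan is to show that every ELS coincides up to an $o(1)$ error at infinity with the minimal ELS $u_*$ provided by Theorem~\ref{existence}. Since by minimality any ELS $u$ satisfies $u \ge u_*$ in $\R^D$, the assertion for two ELS $u_1,u_2$ will then follow from the triangle inequality. The crux is therefore to prove $u(x) - u_*(x) \to 0$ as $|x| \to \infty$ for an arbitrary ELS $u$.

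Condition \eqref{cond} makes the PDE exactly radial on the exterior $\Omega_e = \{|x|>1\}$. I would first construct a monotone one-parameter family of radial ELS $\{V_\lambda\}_{\lambda>0}$ of the exterior problem $\Delta V = |x|^{-\alpha} f(V)$ in $\Omega_e$ with $V_\lambda(1) = \lambda$, by adapting the sub/super-solution scheme of Theorem~\ref{existence} to an exterior domain. The central ODE lemma then asserts that
$$
V_{\lambda_1}(r) - V_{\lambda_2}(r) \to 0 \qquad \text{as } r \to \infty \quad \text{for all }\lambda_1,\lambda_2 > 0.
$$
To establish it, I would use the Kelvin-type substitution $t = r^{2-D}$, $v(t) = V_\lambda(r)$, which recasts the radial equation as
$$
v''(t) = (D-2)^{-2}\, t^{(\alpha - 2D + 2)/(D-2)}\, f(v), \qquad v(t) \to +\infty \text{ as } t \to 0^+.
$$
A first-integral calculation, patterned on the autonomous case $\alpha = 2D-2$ in the introduction, gives $(v')^2 \sim 2(D-2)^{-2}t^{(\alpha-2D+2)/(D-2)}F(v)$, and differentiating the implicit relation $t = \int_{v(t)}^{\infty}(\cdots)^{-1/2}dw$ with respect to $\lambda$ shows that $\partial v/\partial\lambda$ decays like $\sqrt{F(v)}\int_v^\infty F^{-3/2}dw$, which tends to zero as $v \to \infty$ by $(KO)$ and the Keller--Osserman function $\Phi$ of \eqref{fiddgr2}. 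The condition $\alpha > 2$ enters here both to give the integrability of the weight near $t=0$ and to force genuine blow-up of $v$ there.

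Given an arbitrary ELS $u$ of \eqref{els}, it remains to sandwich $V_{\lambda_-}(|x|) \le u(x) \le V_{\lambda_+}(|x|)$ on $\Omega_e$ with $\lambda_\pm$ bracketing $u|_{\partial B_1}$; combined with the analogous sandwich for $u_*$ and the ODE lemma above, this yields $u - u_* \to 0$ at infinity. The principal obstacle is precisely this sandwich step, because $f$ is not assumed monotone and the standard maximum principle does not apply directly to the nonlinear comparison. My workaround is to linearize the difference as
$$
\Delta(u - V_\lambda) = |x|^{-\alpha}\,c(x)\,(u - V_\lambda), \qquad c(x) = \int_0^1 f'\bigl(\tau u + (1-\tau) V_\lambda\bigr)\,d\tau,
$$
and to run a Phragm\'en--Lindel\"of-type argument for the linear operator $\Delta - |x|^{-\alpha} c(x)$ on the annuli $\{1 \le |x| \le R\}$ with $R \to \infty$, exploiting $\alpha > 2$ to control the potential at infinity and to absorb the sign-changing part of $c$ into a superharmonic weight. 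This linear comparison, rather than the original nonlinear one, is where I expect the bulk of the technical work to lie.
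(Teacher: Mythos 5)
Your outline has the right general shape (pass to the radial exterior problem, compare an arbitrary ELS with the minimal one, and prove an ODE lemma saying that two radial exterior blow-up solutions differ by $o(1)$), but both of the two steps you yourself identify as the crux are left as heuristics, and in each case the missing argument is exactly where the paper has to work hard. First, the sandwich step: linearizing $u-V_\lambda$ and invoking a Phragm\'en--Lindel\"of principle for $\Delta-|x|^{-\alpha}c(x)$ cannot work under the hypotheses of Theorem \ref{existence}, because $f$ is not monotone and $f'$ is unbounded, so $c(x)=\int_0^1 f'(\tau u+(1-\tau)V_\lambda)\,d\tau$ may be negative with size of order $f'$ evaluated at the (divergent) values of $u$; for model nonlinearities such as $f(u)=u^2(1+\cos u)$ the resulting negative part of the potential is of order $C|x|^{-2}$ with $C$ large, for which no maximum principle on exterior domains is available (Hardy-type obstruction). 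There is no obvious ``superharmonic weight'' that absorbs it. The paper avoids any such comparison: the lower bound comes for free from minimality (an ELS of $\rho$ is a supersolution of the equation with a larger radial weight $\rho_1$), and the upper bound is obtained not by comparison with a prescribed $V_{\lambda_+}$ but by Lemma \ref{sandwich}, which builds a radial ELS \emph{above} the given solution using minimal solutions in balls with boundary data $n$ together with the Gidas--Ni--Nirenberg symmetry result.

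Second, your ``central ODE lemma'' is asserted rather than proved. For $\alpha\neq 2D-2$ the substitution $t=r^{2-D}$ gives a genuinely non-autonomous equation, so there is no first integral, and the claimed asymptotics $(v')^2\sim 2(D-2)^{-2}t^{\gamma}F(v)$ together with the implicit relation $t=\int_{v}^{\infty}(\cdots)^{-1/2}dw$ and the differentiation in $\lambda$ (which presupposes smooth dependence and a quantitative bound on $\partial v/\partial\lambda$) are precisely the statements that need proof; under bare $(KO)$, with no monotonicity or growth condition on $f$, none of them is immediate. The paper's proof replaces this by a different device: it inverts $v\mapsto t(v)$, sets $V=-dv/dt$, uses the scaling $t=r^{1-\alpha/2}$ so that the system becomes $V\,dV/dv-(K/t)V=f(v)$ with $K=(\alpha-2D+2)/(\alpha-2)<1$, proves that $t^KV$ is increasing, orders $V_1,V_2$ for large $v$ by an oscillation/intermediate-value argument, and then treats $K\in[0,1)$ and $K<0$ separately (the latter through the concavity in $\lambda$ of the functional $E(\lambda)$), finally deducing $0\le u_2-u_1\le C\,t_2^{1-K}(u_2)\to0$. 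Unless you supply a rigorous substitute for these two steps, the proposal does not constitute a proof.
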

As an immediate corollary, we find:
\begin{cor}Make the same assumptions as in Theorem \ref{existence}.
Suppose also that $f$ is nondecreasing and that \eqref{cond} holds for some $\alpha>2$. Then, there exists exactly one ELS to \eqref{els}.
\end{cor}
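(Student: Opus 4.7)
The plan is to combine Theorem \ref{existence} (for existence of an ELS with $\beta=+\infty$) with Theorem \ref{uniq} (which provides the crucial asymptotic pinching $u_1-u_2\to 0$) and then close the argument by an application of the maximum principle that exploits the monotonicity of $f$.

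First, I would invoke Theorem \ref{existence} to obtain at least one ELS. Second, given two ELS $u_1,u_2$, I would set $w=u_1-u_2$ and compute
\begin{equation*}
-\Delta w + V(x)\,w = 0 \quad\text{in }\R^D,
\qquad V(x)\ :=\ \rho(x)\cdot\frac{f(u_1(x))-f(u_2(x))}{u_1(x)-u_2(x)},
\end{equation*}
with $V(x)$ defined by $\rho(x) f'(\xi(x))$ at points where $u_1=u_2$. Because $f$ is nondecreasing and $\rho>0$, $V\ge 0$ in $\R^D$.

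Next, I would apply Theorem \ref{uniq}, whose hypothesis \eqref{cond} is exactly what the corollary assumes, to conclude that $w(x)\to 0$ as $|x|\to+\infty$. Consequently, if $\sup_{\R^D} w>0$, this supremum is attained at some interior point $x_0\in\R^D$. At $x_0$ one has $-\Delta w(x_0)\ge 0$ and $V(x_0)w(x_0)\ge 0$, while the equation forces $-\Delta w(x_0)=-V(x_0)w(x_0)\le 0$; the strong maximum principle for the operator $-\Delta+V$ with $V\ge 0$ then implies that $w$ is identically equal to its positive maximum, contradicting $w\to 0$ at infinity. Hence $\sup w\le 0$, and exchanging the roles of $u_1$ and $u_2$ yields $w\equiv 0$.

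This argument is essentially routine once Theorem \ref{uniq} is available, so I do not foresee a substantive obstacle. The only subtle point worth double-checking is the measurability/boundedness of the potential $V$, which follows from the $C^1$ regularity of $f$ and $\rho$ and the local boundedness of the solutions $u_1, u_2$; this is enough to justify the strong maximum principle on arbitrary balls in $\R^D$.
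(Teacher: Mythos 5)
Your argument is correct and is exactly the one the paper intends: Theorem \ref{existence} gives an ELS, Theorem \ref{uniq} pins $u_1-u_2\to 0$ at infinity, and then the maximum principle for $-\Delta + V$ with $V=\rho\,(f(u_1)-f(u_2))/(u_1-u_2)\ge 0$ (using $f$ nondecreasing) forces $u_1\equiv u_2$; this is precisely the ``immediate corollary'' the authors describe, mirroring their earlier remark that monotonicity of $f$ yields uniqueness via the maximum principle once the boundary behaviour at infinity is controlled.
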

Applying the moving-plane procedure as in \cite{ni-li} for the case $\beta<+\infty$ and as in \cite{taliaferro} for the case $\beta=+\infty$, we also have immediately:
\begin{cor}\label{cor mp} Fix $\beta\in(0,+\infty]$. Make the same assumptions as in Theorem \ref{existence}.
Suppose also that $f$ is nondecreasing on some interval $[M,\beta)$, and that $\rho$ is a radially decreasing function such that \eqref{cond} holds for some $\alpha>2$. Then, every solution to \eqref{els}-\eqref{limitbeta} is radial.
\end{cor}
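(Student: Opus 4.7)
The proof follows the moving-plane method in a fairly standard way. By the rotational invariance of the hypotheses on $\rho$ and $f$, it suffices to establish symmetry of $u$ with respect to an arbitrary hyperplane through the origin; by a rotation we may take this hyperplane to be $\{x_1=0\}$. For $\la\in\R$ set
$$\Sigma_\la=\{x\in\R^D : x_1>\la\},\quad x^\la=(2\la-x_1,x_2,\ldots,x_D),\quad u^\la(x)=u(x^\la),\quad w_\la=u^\la-u.$$
The hypothesis that $\rho$ is radially decreasing gives $\rho(x^\la)\ge\rho(x)$ on $\Sigma_\la$ whenever $\la\ge0$, while \eqref{limitbeta} and the monotonicity of $f$ on $[M,\beta)$ imply that $\min(u,u^\la)\ge M$ outside a compact set. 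In this regime one may linearize and write $\Delta w_\la\ge\rho(x)c_\la(x)w_\la$, with $c_\la\ge0$ wherever $w_\la\ge0$.

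The target is $w_\la\ge0$ in $\Sigma_\la$ for every $\la\ge0$; applying the same reasoning in direction $-e_1$ yields the reverse inequality at $\la=0$, hence invariance of $u$ under $x_1\mapsto -x_1$ and thus the claimed symmetry. Following the usual scheme, the plan is to start at $\la$ large and then set $\la_0:=\inf\{\la\ge0 : w_\mu\ge0 \text{ in } \Sigma_\mu \text{ for every } \mu\ge\la\}$, arguing by contradiction to reach $\la_0=0$. In the bounded case $\beta<+\infty$, the initialization is carried out as in Li--Ni \cite{ni-li}: $w_\la\to 0$ at infinity, $w_\la=0$ on $\partial\Sigma_\la$, and a maximum-principle argument in the unbounded domain $\Sigma_\la$ (exploiting the fast decay of $\rho$) rules out a negative minimum. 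In the ELS case $\beta=+\infty$, the reflected function $u^\la$ is itself an ELS, so Theorem \ref{uniq} provides $w_\la\to 0$ at infinity; this allows Taliaferro's version of the method \cite{taliaferro} to start. The downward sweep $\la\searrow\la_0$ is then standard: a strong-maximum-principle and Hopf-lemma argument, valid since $u(x)\to\beta>M$ outside a compact set, excludes $\la_0>0$.

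The main obstacle, pinpointed by the cited authors, is the start-up step when $\beta=+\infty$: without decay of $u$ itself one cannot directly apply a maximum principle in $\Sigma_\la$ for large $\la$. The crucial input that unlocks the method in our setting is precisely the asymptotic comparison between any two ELS provided by Theorem \ref{uniq}, which plays the role that decay to a finite limit does in the bounded case of \cite{ni-li}. The remaining ingredients (the choice of $M$, the sign of $c_\la$, and the Hopf-lemma step at $\la_0$) are routine consequences of the monotonicity of $f$ on $[M,\beta)$ and the monotonicity of $\rho$.
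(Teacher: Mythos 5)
Your overall architecture coincides with the paper's: the paper proves this corollary simply by invoking the moving-plane procedure of Li--Ni for $\beta<+\infty$ and of Taliaferro for $\beta=+\infty$, with Theorem \ref{uniq} supplying the asymptotic information needed to start the planes in the ELS case. However, two of your concrete steps are wrong as written. First, the orientation of the moving-plane inequality is reversed. Since $\Delta u=\rho f(u)>0$, solutions are subharmonic and a radial solution is strictly increasing in $r$ (indeed $(r^{D-1}u')'>0$). As $|x^{\lambda}|\le |x|$ on $\Sigma_{\lambda}$ for $\lambda\ge 0$, the inequality to be proved is $w_{\lambda}=u^{\lambda}-u\le 0$ in $\Sigma_{\lambda}$; your target $w_{\lambda}\ge 0$ is false already for the radial model. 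Consistently, the inequality you derive, $\Delta w_{\lambda}\ge \rho\, c_{\lambda} w_{\lambda}$ with $c_{\lambda}\ge 0$, makes $w_{\lambda}$ a subsolution of $\Delta-\rho c_{\lambda}$, which can only exclude an interior positive maximum (and hence yields $w_{\lambda}\le 0$ once $w_{\lambda}\le 0$ on $\partial\Sigma_{\lambda}$ and $\limsup_{|x|\to\infty}w_{\lambda}\le 0$); it cannot ``rule out a negative minimum'', so the scheme as you state it does not close.

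Second, the start-up for $\beta=+\infty$ misuses Theorem \ref{uniq}. The reflected function satisfies $\Delta u^{\lambda}(x)=\rho(x^{\lambda})f(u^{\lambda}(x))$, and $\rho(x^{\lambda})\ne\rho(x)$ off the hyperplane, so $u^{\lambda}$ is \emph{not} an ELS of \eqref{els} and Theorem \ref{uniq} does not apply to the pair $(u,u^{\lambda})$. Moreover the two-sided statement $w_{\lambda}\to 0$ is not true in general: along the $x_1$-axis $u(x^{\lambda})-u(x)$ is comparable to $-2\lambda u'(|x|)$ for a radial ELS, and $u'$ need not vanish at infinity (e.g. $f(u)=u^p$ with $\alpha-2>p-1$ gives ELS of power growth $|x|^{(\alpha-2)/(p-1)}$ with diverging radial derivative). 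What Theorem \ref{uniq} actually provides — and what Taliaferro's method needs — is one-sided: comparing $u$ with the minimal ELS $u_{\min}$ (which is radial, since the minimal solution is unique and the problem is rotation invariant), one has $u-u_{\min}\to 0$, and writing
\begin{equation*}
w_{\lambda}(x)=\bigl[u(x^{\lambda})-u_{\min}(x^{\lambda})\bigr]+\bigl[u_{\min}(x^{\lambda})-u_{\min}(x)\bigr]+\bigl[u_{\min}(x)-u(x)\bigr],
\end{equation*}
the middle term is $\le 0$ by radial monotonicity and $|x^{\lambda}|\le|x|$, so $\limsup_{|x|\to\infty,\ x\in\Sigma_{\lambda}}w_{\lambda}\le 0$, which is the control that lets the plane start and sweep down. (The remaining delicacy — the compact region where $u<M$, on which no monotonicity of $f$ is available — is deferred by the paper to the cited references as well, but it is not ``routine'' and deserves at least a remark.)
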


\begin{remark}
It would be interesting to know whether Corollary \ref{cor mp} remains true for oscillating nonlinearities such as the one in Example \ref{example}.
\end{remark}
Next, we are able to extend the previous results to the case where $\rho$ is a perturbation of the model case $\rho(x)=\vert x\vert^{-\alpha}$, $\alpha=2D-2$.
\begin{theorem}\label{uniqueness}
Make the same assumptions as in Theorem \ref{existence}. Suppose also that there exists a constant $C>0$ such that
\begin{equation} \label{cond2}
\rho(x)=\vert x\vert^{2-2D}(1+\sigma(\vert x\vert)),\quad\text{where}\quad \left\vert \frac{d\sigma}{dr}(\vert x\vert)\right\vert\le C\vert x\vert^{1-D}\quad\text{for $\vert x\vert\ge1$}.
\end{equation}
Then, given two ELS $u_{1},u_{2}$, there holds
$$
\lim_{\vert x\vert\to+\infty} [u_{1}(x)-u_{2}(x)] = 0.
$$
\end{theorem}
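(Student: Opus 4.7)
The strategy is to view this theorem as a perturbative extension of Theorem \ref{uniq} adapted to the critical exponent $\alpha = 2D-2$. First, integrating the hypothesis $\bigl|\frac{d\sigma}{dr}(|x|)\bigr| \leq C|x|^{1-D}$ from $r$ to $+\infty$ shows that $\sigma$ has a finite limit $\sigma_\infty$ at infinity, with $|\sigma(r) - \sigma_\infty| \leq \frac{C}{D-2}\, r^{2-D}$. Set $\tilde f(u) := (1+\sigma_\infty)\, f(u)$ and consider the ``frozen'' equation $\Delta w = |x|^{2-2D}\,\tilde f(w)$. Since $\tilde f$ inherits $(KO)$ and the other hypotheses on $f$, Theorem \ref{uniq} applies directly with $\alpha = 2D-2$ and guarantees that any two ELS of the frozen equation differ by $o(1)$ at infinity. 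It is therefore enough to show that every ELS $u$ of \eqref{els} satisfies $u(x) - w(x) \to 0$ for one fixed ELS $w$ of the frozen equation.

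The key technical observation is that the substitution $t = |x|^{2-D}$ is specifically adapted to the exponent $\alpha = 2D-2$: on radial profiles and in this variable, the frozen equation becomes the autonomous ODE $w''(t) = \frac{1}{(D-2)^2}\,\tilde f(w)$, while the perturbed equation \eqref{els} becomes
\[
v''(t) = \frac{1}{(D-2)^2}\bigl(1 + \eta(t)\bigr)\,\tilde f(v), \qquad \eta(t) = O(t) \text{ as } t \to 0^+,
\]
with $v(t), w(t) \to +\infty$ as $t \to 0^+$. One then runs a sub/supersolution argument in the $t$-variable: for any $\epsilon > 0$ and $t \leq t_\epsilon$ small enough, the coefficient of $\tilde f(v)$ is pinched between $(1 \pm \epsilon)/(D-2)^2$, so $v$ is sandwiched between ELS of the two corresponding autonomous ODEs. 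Theorem \ref{uniq}, applied with $\tilde f$ rescaled by $1 \pm \epsilon$, forces each of these autonomous ELS to differ from $w$ by a quantity that vanishes as $\epsilon \to 0$; passing to the limit then yields $v(t) - w(t) \to 0$, i.e. $u(x) - w(x) \to 0$ along radial profiles.

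The step I expect to be the most delicate is the preliminary reduction to radial profiles: since $f$ is not assumed monotone, Corollary \ref{cor mp} does not apply, so a given ELS $u$ of \eqref{els} need not be radial, and one must control the spherical oscillation $\max_{|x|=r} u - \min_{|x|=r} u \to 0$ as $r \to +\infty$. The difference $u - w$ satisfies
\[
\Delta(u-w) = \rho(x)\bigl[f(u) - f(w)\bigr] + \bigl[\rho(x) - |x|^{2-2D}(1+\sigma_\infty)\bigr] f(w),
\]
and the inhomogeneous term is $O(|x|^{4-3D})\, f(w)$, which decays strictly faster than the critical rate thanks to the derivative bound on $\sigma$. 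A Harnack-type argument applied to $u-w$ on exterior dyadic annuli $\{r < |x| < 2r\}$, combined with the fact that $f$ is $C^1$ and $\rho$ is radial, should then give the required angular regularity. This is the step where the specific decay assumption $|\sigma'(r)| \leq C|x|^{1-D}$ is crucial: any slower decay would obstruct the comparison between $u$ and the radial profile $w$.
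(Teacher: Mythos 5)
Your preliminary reductions are sound: the change of variable $t=|x|^{2-D}$ turns the radial profile equation into $v''(t)=\tilde\rho(t)f(v)$ with $\tilde\rho(t)=\frac{1}{(D-2)^2}(1+\sigma(r))$, and the hypothesis $|\sigma'(r)|\le C r^{1-D}$ (via $dr/dt=-\frac{1}{D-2}r^{D-1}$) is exactly what makes $\tilde\rho$ Lipschitz in $t$, hence convergent as $t\to0^+$, so $\eta(t)=O(t)$ is correct. The reduction to radial profiles proceeds as in Theorem~\ref{uniq} through Lemma~\ref{sandwich}, not via the Harnack-on-annuli argument you sketch; the latter is the mechanism of Theorem~\ref{tunbounded}, which needs extra hypotheses the present theorem does not assume.

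The decisive gap is in the ``freeze the coefficient and squeeze'' step. For a fixed $\epsilon>0$ you can indeed pinch the coefficient of $\tilde f(v)$ between $(1\pm\epsilon)/(D-2)^2$ on $(0,t_\epsilon)$ and compare $v$ with blow-up solutions $w_\epsilon^\pm$ of the two autonomous equations, but Theorem~\ref{uniq} only identifies ELS of the \emph{same} equation up to $o(1)$. It says nothing about $w_\epsilon^+-w_\epsilon^-$ or about $w_\epsilon^\pm-w$, and in fact these differences \emph{do not} vanish as $t\to0^+$ for fixed $\epsilon$: e.g. for $\tilde f(u)=u^p$, an ELS of $v''=c\,u^p$ behaves near $t=0$ like $\gamma(c)\,t^{-2/(p-1)}$ with $\gamma$ depending on $c$, so $w_\epsilon^+-w_\epsilon^-\sim C\epsilon\, t^{-2/(p-1)}$ blows up. The sandwich therefore gives a bound on $|v-w|$ that itself diverges as $t\to0^+$, and you cannot ``pass to the limit $\epsilon\to0$'' without first sending $t\to0$, which is circular. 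Making the freezing idea rigorous would require a \emph{quantitative} continuous-dependence estimate of the ELS with respect to the coefficient (with $\epsilon$ shrinking as a function of $t$), which is precisely the kind of estimate the theorem is trying to establish and is not furnished by Theorem~\ref{uniq}.

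The paper avoids this entirely. It never freezes the coefficient: after passing to the variables $(t(v),V(v))$ with $V=-dv/dt$, it writes $V\,dV/dv=\tilde\rho(t(v))f(v)$, uses L'H\^opital to get $V^2/F\to 2/(D-2)^2$ (Step~1), and then runs a direct Gronwall-type estimate on $\sqrt{F}\,|V_1-V_2|$ (Steps~2--5) whose driving term is controlled precisely by the Lipschitz continuity of $\tilde\rho$ in $t$ — the one analytic consequence of \eqref{cond2} that your argument derives but does not actually exploit. You would need to replace your squeeze step with an argument of this contraction type for the proof to close.
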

These are our best results without making any assumption on the nonlinearity $f$, set aside the structural Keller-Osserman condition $(KO)$.
In the next set of results, we investigate the question of uniqueness for more general potentials $\rho(x)$ under an extra convexity assumption on the nonlinearity $f$. We begin with the case where $\rho$ is radial.

\begin{theorem}\label{coro3}
Make the same assumptions as in Theorem \ref{existence}. Assume in addition that
\begin{enumerate}
\item[(i)] $f$ is nondecreasing,
\item[(ii)] $\sqrt F$ is convex on $[M,+\infty)$ for some $M>0$,
\item[(iii)] $\rho$ is radially symmetric, and
\item[(iv)] $r^{2D-2}\rho(r)$ is nondecreasing on $[R,+\infty)$, for some $R>0$.
\end{enumerate}
Then there exists a unique ELS of \eq{els}.
\end{theorem}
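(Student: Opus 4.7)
The strategy is to produce a minimal and a maximal ELS, both radial, and to show they coincide via an ODE analysis driven by the convexity of $\sqrt F$ and the monotonicity assumption (iv); every ELS is then sandwiched between them.

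Theorem \ref{existence} provides a minimal ELS $\underline u$, which is radial because $\rho$ is radial and the construction of $\underline u$ (as a monotone supremum of supersolutions on balls with large constant data) preserves radial symmetry. To produce a radial maximal ELS, I would set $u_R^k$ to be the solution of the Dirichlet problem $\Delta u_R^k=\rho f(u_R^k)$ in $B_R$ with $u_R^k=k$ on $\partial B_R$; by the Keller--Osserman comparison, the monotone limit $u_R:=\lim_{k\to\infty}u_R^k$ exists and is a boundary blow-up solution in $B_R$. It is radial (constant boundary data and radial $\rho$) and decreasing in $R$, so $\bar u(x):=\lim_{R\to\infty}u_R(x)$ is a radial entire solution. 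Any ELS $u$ satisfies $u\le k\le u_R^k$ on $\partial B_R$ once $k$ is large, hence $u\le u_R^k$ in $B_R$ by comparison (using (i), $f$ nondecreasing), so $u\le u_R$ and eventually $u\le\bar u$. Combined with minimality $u\ge\underline u$, we have $\underline u\le u\le\bar u$ for every ELS $u$; in particular $\bar u\ge\underline u$ forces $\bar u$ itself to be an ELS. Uniqueness is thus equivalent to $\underline u\equiv\bar u$.

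To prove $\underline u\equiv\bar u$, I would pass to the radial ODE. With $t=r^{2-D}/(D-2)$ and $v(t)=\phi(r)$, a radial solution of \eqref{els} satisfies
\begin{equation*}
v''(t)=h(t)f(v(t)),\qquad h(t):=r^{2D-2}\rho(r),
\end{equation*}
with $v(0^+)=+\infty$ corresponding to an ELS. Assumption (iv) makes $h$ nonincreasing as $t\to 0^+$, so the energy $E(t)=\tfrac12(v')^2-h(t)F(v)$ satisfies $E'(t)=-h'(t)F(v)\ge 0$ near $0$ and has a limit. Combined with the convexity of $\sqrt F$ from (ii), the classical Bandle--Marcus ingredient for sharp boundary blow-up uniqueness, this yields a comparison of blowing-up ODE solutions strong enough to give $\bar v(t)-\underline v(t)\to 0$ as $t\to 0^+$, i.e.\ $\bar u(r)-\underline u(r)\to 0$ as $r\to\infty$. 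The difference $w:=\bar u-\underline u\ge 0$ is subharmonic, since
\begin{equation*}
\Delta w=\rho(x)\bigl[f(\bar u)-f(\underline u)\bigr]\ge 0
\end{equation*}
by (i); combined with $w\to 0$ at infinity, the maximum principle on arbitrarily large balls forces $w\equiv 0$, hence $\bar u=\underline u$ and uniqueness.

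The main obstacle is the ODE comparison. Convexity of $\sqrt F$ is classical for boundary blow-up on bounded smooth domains where the coefficient of $f$ is bounded above and below; here $h(t)$ need not have a positive finite limit at $t=0^+$, as it may blow up or vanish, so one cannot simply invoke an autonomous model. Assumption (iv) must be combined with the convexity to produce a one-sided differential inequality for $\sqrt{F(\bar v)}-\sqrt{F(\underline v)}$, equivalently for $\Phi(\bar v)-\Phi(\underline v)$ with $\Phi$ as in \eqref{fiddgr2}, sharp enough to survive integration up to $t=0^+$ and yield the $o(1)$ remainder---not merely the ratio bound $\bar v/\underline v\to 1$---needed to conclude.
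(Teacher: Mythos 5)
Your reduction of the theorem to comparing a minimal and a maximal radial ELS is legitimate (the maximal solution exists here precisely because (i) gives a comparison principle), and your energy observation is sound: $E'(t)=-h'(t)F(v)\ge 0$ near $t=0$ is exactly how the paper exploits hypothesis (iv) — integrating the radial equation against $2r^{D-1}u'$ yields the one-sided gradient bound $|\nabla u_{\infty}|^2/\rho\le 2(F(u_{\infty})+C)$ of \eqref{scaef} outside $B_R$ for the minimal ELS. But the heart of the theorem is the step you yourself flag as ``the main obstacle'': you never establish that two blowing-up solutions of $v''=h(t)f(v)$, with $h$ merely nonincreasing near $t=0$ (possibly unbounded or vanishing there), satisfy $\bar v(t)-\underline v(t)\to 0$; you only assert that (ii) and (iv) ``must be combined'' into a one-sided inequality sharp enough to integrate up to $t=0^+$. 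Nothing in the sketch says how the convexity of $\sqrt F$ enters quantitatively for a general coefficient $h$, and the analogous ODE comparisons the paper does carry out (Theorems \ref{uniq} and \ref{uniqueness}) rely on the specific potentials $|x|^{-\alpha}$ and perturbations of $|x|^{2-2D}$. As it stands this is a genuine gap, not a proof.

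It is worth seeing how the paper avoids the asymptotic comparison altogether, because that is the missing idea. Having obtained \eqref{scaef} for the minimal ELS $u_{\infty}$ by the very computation you propose, one compares an arbitrary ELS $u\ge u_{\infty}$ through the change of unknown $v=\int_u^{+\infty}dt/\sqrt{2(F(t)+C)}$ and $v_{\infty}=\int_{u_{\infty}}^{+\infty}dt/\sqrt{2(F(t)+C)}$: the blow-up at infinity is converted for free into the condition $w:=v-v_{\infty}\to 0$ as $|x|\to+\infty$, the convexity of $\sqrt F$ is used only through the monotonicity of $f/\sqrt{2(F+C)}$, and together with $\rho-|\nabla v_{\infty}|^2\ge 0$ (i.e.\ \eqref{scaef}) this gives a differential inequality of the form $-\Delta w+b\cdot\nabla w\ge 0$ to which the maximum principle applies (Step 2 of Theorem \ref{ELSconvex}); the uniqueness of the ELS then follows as in that proof. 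So the convexity is exploited via an elliptic inequality for the transformed difference, not via an expansion of $\bar v-\underline v$. To complete your proposal you would either have to supply the delicate ODE estimate you left open, or switch to this substitution, after which your maximal/minimal framework becomes unnecessary.
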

It is possible to extend the previous result to nonradial $\rho$, provided some extra information on the mean curvature of its level sets is available.
\begin{theorem}\label{ELSconvex}
Make the same assumptions as in Theorem \ref{existence}. Assume in addition that
\begin{enumerate}
\item[(i)] $f$ is nondecreasing;
\item[(ii)] $\sqrt F$ is convex on $[M,+\infty)$ for some $M>0$;
\item[(iii)] $\lim_{\vert x\vert\rightarrow+\infty}\rho(x)=0$;
\item[(iv)]  $\sqrt{\rho}$ is superharmonic in $\RR^D\setminus B_{R}$ for some $R>0$;
\item[(v)] $\rho\in C^{D+1}(\R^D)$ and for a sequence of regular values $\rho_{n}\to 0^+$,
\begin{equation}\label{meanc}
2(D-1)H_{n}\geq\frac{|\nabla \rho|}{\rho}\quad\mbox{ on }[\rho=\rho_{n}],
\end{equation}
where $H_{n}$ denotes the mean curvature of the level set $[\rho=\rho_{n}]$ (with the usual sign convention that $H_{n}\ge0$ whenever $[\rho>\rho_{n}]$ is convex).
\end{enumerate}
Then, there exists a unique ELS of \eq{els}.
\end{theorem}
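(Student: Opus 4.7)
\emph{Plan.} We follow the scheme of Theorems \ref{uniq}, \ref{uniqueness} and \ref{coro3}: reduce uniqueness to asymptotic coincidence of two ELS, then invoke the maximum principle. If $u_{1},u_{2}$ are two ELS, then $w=u_{1}-u_{2}$ satisfies
\begin{equation*}
\Delta w -\rho(x)c(x)w=0\quad\text{in }\R^{D},\qquad c(x):=\frac{f(u_{1}(x))-f(u_{2}(x))}{u_{1}(x)-u_{2}(x)}\ge 0
\end{equation*}
(extended continuously where $u_{1}=u_{2}$) thanks to $(i)$. The zeroth-order coefficient $-\rho c$ is nonpositive, so the classical maximum principle, applied on balls of increasing radius and combined with the condition $w\to 0$ at infinity, forces $w\le 0$; by symmetry, $w\equiv 0$. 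Everything thus reduces to proving the universal asymptotic expansion $u(x)=\Psi(x)+o(1)$ as $|x|\to+\infty$ for every ELS $u$, where $\Psi$ depends only on $f$ and $\rho$.

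The candidate is $\Psi(x)=\psi(d(x))$, with $\psi:(0,s_{0}]\to[M,+\infty)$ the decreasing Keller--Osserman profile satisfying the autonomous ODE $\psi''=f(\psi)$ and $\psi(s)\to+\infty$ as $s\to 0^{+}$ (well-defined and sharp thanks to $(KO)$, $(i)$, and the convexity of $\sqrt{F}$ in $(ii)$), and $d:\R^{D}\setminus B_{R_{0}}\to(0,s_{0}]$ an ``effective distance to infinity'' built from the level-set foliation of $\rho$, normalised so that $|\nabla d(x)|^{2}=\rho(x)(1+o(1))$ near infinity. For $\delta$ small, set $W_{\delta}(x):=\psi(d(x)+\delta)$; then
\begin{equation*}
\Delta W_{\delta}=|\nabla d|^{2}f(W_{\delta})+\psi'(d+\delta)\,\Delta d,
\end{equation*}
so $W_{\delta}$ will be a super-/sub-solution of \eqref{els} (modulo a controllable error coming from $|\nabla d|^{2}-\rho$) as soon as the residual $\psi'(d+\delta)\Delta d$ has the appropriate sign.

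The sign analysis is where assumptions $(iv)$ and $(v)$ enter, in a complementary fashion. The superharmonicity of $\sqrt{\rho}$ in $(iv)$ is equivalent to $\Delta\rho\le|\nabla\rho|^{2}/(2\rho)$, while the curvature inequality $(v)$, combined with the classical level-set decomposition
$$\Delta\rho=-\partial_{\nu}|\nabla\rho|-(D-1)H_{n}|\nabla\rho|\quad\text{on }[\rho=\rho_{n}]$$
(with $\nu=-\nabla\rho/|\nabla\rho|$), yields a matching control in the opposite direction. Together they pin $\Delta d$ down precisely enough that, for $\delta>0$ small, $W_{\delta}$ is a supersolution and $W_{-\delta}$ a subsolution of \eqref{els} in $\R^{D}\setminus B_{R}$. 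Sandwiching $u$ between $W_{\pm\delta}$ (using the boundary behaviour $W_{\pm\delta}\to+\infty$ as $d\to 0^{+}$ on the outer side and the explicit value $\psi(s_{0})$ on the inner boundary) and letting $\delta\to 0^{+}$ delivers $u(x)=\psi(d(x))+o(1)$; combined with Step~1, this proves $u_{1}\equiv u_{2}$.

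The main obstacle is the geometric part of Step 3: producing the distance function $d$ and verifying that $\psi'(d+\delta)\Delta d$ has the correct sign uniformly along the non-spherical level sets of $\rho$. The mean-curvature estimate $(v)$ should be viewed as the nonradial analogue of the monotonicity of $r^{2D-2}\rho(r)$ in Theorem \ref{coro3} and is exactly what preserves the radial-like structure of the barriers; condition $(iv)$ plays a complementary role, providing the opposite one-sided control on $\Delta\rho$ that is needed to produce \emph{both} a sub- and a supersolution, not merely a one-sided bound.
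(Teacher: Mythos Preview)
Your proposal is a plan rather than a proof, and its central geometric claim is incorrect. You assert that $(iv)$ and $(v)$ provide ``opposite one-sided controls'' on $\Delta\rho$ (or equivalently on $\Delta d$), allowing you to manufacture \emph{both} a sub- and a supersolution of the form $\psi(d(x)\pm\delta)$. But this reading is wrong: superharmonicity of $\sqrt\rho$ in $(iv)$ says $\Delta\rho\le |\nabla\rho|^{2}/(2\rho)$, while the curvature bound $(v)$, plugged into the level-set formula $\Delta\rho=-\partial_{\nu}|\nabla\rho|-(D-1)H_{n}|\nabla\rho|$, gives $(D-1)H_{n}|\nabla\rho|\ge |\nabla\rho|^{2}/(2\rho)$, which again pushes $\Delta\rho$ \emph{down}, not up. The two hypotheses do not bracket $\Delta\rho$; they act in the same direction. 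Consequently there is no mechanism in your scheme to produce the subsolution barrier $W_{-\delta}$, and the sandwich argument collapses. The construction of the effective distance $d$ with $|\nabla d|^{2}=\rho(1+o(1))$ is also left entirely unspecified; on genuinely nonradial level sets this is a nontrivial integrability constraint that you have not addressed.

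The paper's proof is entirely different and does not build barriers at all. It uses a $P$-function maximum principle of Schaefer--Sperb type (Theorem~C.1): for approximating solutions $u_{k}$ on the annuli $\Omega_{k}\setminus\Omega_{1}$ bounded by level sets of $\rho$, the functional $P_{k}=|\nabla u_{k}|^{2}/\rho-2F(u_{k})$ satisfies an elliptic inequality (this is where $(iv)$ enters) and hence attains its maximum either at a critical point of $u_{k}$ or on the boundary; condition $(v)$ then rules out the outer boundary via Hopf's lemma. Passing to the limit yields the one-sided gradient estimate $|\nabla u_{\infty}|^{2}\le 2\rho\,(F(u_{\infty})+C)$. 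This single inequality, combined with the convexity of $\sqrt F$ from $(ii)$, is enough: after the change of variable $v=\int_{u}^{+\infty}(2F+2C)^{-1/2}$, the difference $w=v-v_{\infty}$ satisfies an elliptic inequality with a first-order drift, and the maximum principle forces $u\equiv u_{\infty}$ outside $\Omega_{1}$. Uniqueness then follows by a final subharmonicity argument on $u-\tilde u$. So $(iv)$ and $(v)$ are not two halves of a sandwich; they are the two ingredients (interior inequality and boundary exclusion) of a single $P$-function estimate, and the convexity hypothesis $(ii)$ is what converts that one-sided bound into rigidity.
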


\begin{remark}
(i) Since $\rho\in C^{D+1}$, it follows from the Morse-Sard lemma that almost all values of $\rho$ are regular and that the corresponding level sets are smooth enough to define their mean curvature. Since $\rho(x)\to0$ as $\vert x\vert\to+\infty$, the level sets are compact, nested, and their union covers $\RR^D$.

(ii) When $\rho$ is radial, \eqref{meanc} reduces to  $\frac d{dr}\left(r^{2D-2}\rho(r)\right)\ge0$ for $r=r_{n}\to+\infty$.
\end{remark}

\begin{example}
Let us try to understand conditions (iv) and (v) in Theorem \ref{ELSconvex} on a simple example: when the level sets of $\rho$ are ellipsoids. Fix $\alpha>2$ and $a\in(0,1)$. For $x=(x_{1},x')\in\R^{D-1}\times\R$, let
$$
v(x) = \left(\frac{x_{1}}{a}\right)^2 + \vert x'\vert^2\quad\text{and}\quad \rho(x)=v(x)^{-\alpha/2}.
$$
Then, $\rho$ and $v$ share the same level sets and by a direct computation, \eqref{meanc} holds if and only if
$$
\alpha \le a^2(2D-2),
$$
while (iv) holds if and only if
$$
(\alpha+2)\le a^2(2D-2).
$$
Under the latter condition, our theorem applies, that is, if $D\ge4$ and the ellipsoid is not too flat, then uniqueness holds.
\end{example}



The remaining part of the paper is organized as follows. In the next section we are concerned with the existence of solutions to \eq{els}, namely we prove Theorem \ref{existence}. In Section 4 we prove Theorem \ref{prop limit} and Theorem \ref{tunbounded} regarding the behavior at infinity of solutions to \eq{els}. In Section 5 we study the uniqueness of ELS to \eq{els} and prove Theorems \ref{uniq}, \ref{uniqueness}, \ref{coro3} and \ref{ELSconvex}. For the reader's convenience we recalled the most important results used in the proofs in Appendix C.

\section{Existence of solutions}

In this section, we prove Theorem \ref{existence}. The first step consists in constructing a subsolution:

\begin{proposition}\label{w}
Assume that $f$ is a $C^1$ function, satisfying $(P_{f})$ and $(KO)$, and such that $f(0)=0$. Assume that $\rho\ge0$ is a $C^1$ function with superlinear decay in the sense of $(H_\rho)$.
Then, for any $\beta\in(0, +\infty]$, there exists a 
function $w_{\beta}\in C^2(\R^D)$ such that
\begin{equation}\label{beta}
\left\{
\begin{aligned}
&\Delta w_{\beta}\geq \rho(x) f(w_{\beta})&&\quad\mbox{ in }\R^D,\\
&\lim_{|x|\to +\infty}w_{\beta}(x)=\beta.
\end{aligned}
\right.
\end{equation}
Moreover, $0< w_{\beta}<\beta$, the family $\{w_{\beta}\}_{\beta\in(0,+\infty]}$ is increasing in $\beta$, and $\lim_{\beta\ri+\infty}w_{\beta}(x)=w_{\infty}(x)$, for all $x\in\R^D$.
\end{proposition}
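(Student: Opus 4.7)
The plan is to construct $w_\beta$ as a composition $w_\beta(x) = \phi_\beta(U(x))$, where $U$ solves $-\Delta U = \rho$ with $U(x) \to 0$ at infinity and $\phi_\beta$ is a one-dimensional autonomous ODE profile with initial value $\beta$. The virtue of this ansatz is that trajectories of an autonomous ODE are ordered by their initial value, so the required monotonicity of the family in $\beta$ comes for free.

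First, by $(H_\rho)$ and the Brezis--Kamin theory (Appendix A of \cite{bk}), the equation $-\Delta U = \rho$ in $\R^D$ admits a unique classical solution $U$ with $U(x) \to 0$ at infinity, and in fact $0 < U \leq \|U\|_\infty < +\infty$ since $\rho \geq 0$ is nontrivial. Because $f$ is not assumed monotone, I would replace it by a $C^1$ non-decreasing majorant $\tilde f \geq \max(f,0)$ with $\tilde f(0)=0$; this preserves $(KO)$ (because $\tilde F \geq F$), and since $\rho \geq 0$ any subsolution of $\Delta w \geq \rho \tilde f(w)$ is automatically one of $\Delta w \geq \rho f(w)$. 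Writing $f$ for $\tilde f$ from now on, for each $\beta \in (0,+\infty)$ let $\phi_\beta$ be the solution of the autonomous Cauchy problem
\[
\phi_\beta'(t) = -f(\phi_\beta(t)), \qquad \phi_\beta(0)=\beta.
\]
Since $f \geq 0$, $f(0)=0$, and $\int_0^\beta ds/f(s)=+\infty$, the trajectory $\phi_\beta$ is global on $[0,+\infty)$, strictly decreasing, takes values in $(0,\beta]$, and is convex because $\phi_\beta''=f'(\phi_\beta)f(\phi_\beta) \geq 0$. Setting $w_\beta := \phi_\beta \circ U \in C^2(\R^D)$, from $\Delta U = -\rho$, $\phi_\beta' = -f(\phi_\beta)$ and $\phi_\beta'' \geq 0$ one computes
\[
\Delta w_\beta = \phi_\beta'(U)\,\Delta U + \phi_\beta''(U)\,|\nabla U|^2 = f(w_\beta)\,\rho + \phi_\beta''(U)\,|\nabla U|^2 \geq \rho\, f(w_\beta),
\]
which is the required subsolution inequality. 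The inclusion $0 < w_\beta < \beta$ follows from $U > 0$ and the range of $\phi_\beta$, and $w_\beta(x) \to \beta$ at infinity follows from $U(x) \to 0$ and $\phi_\beta(0)=\beta$. Monotonicity in $\beta$ is immediate from ODE uniqueness: autonomous trajectories cannot cross, so $\beta_1 < \beta_2$ forces $\phi_{\beta_1}(t) < \phi_{\beta_2}(t)$ for all $t \geq 0$, hence $w_{\beta_1} < w_{\beta_2}$ pointwise.

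For $\beta = +\infty$ I would set $w_\infty := \lim_{\beta \to +\infty} w_\beta = \phi_\infty \circ U$ with $\phi_\infty(t) := \lim_\beta \phi_\beta(t) \in (0,+\infty]$, which exists by the monotonicity just proved. The crucial point is that $\phi_\infty$ must be finite for every $t > 0$, equivalently $\int^{+\infty} ds/f(s) < +\infty$. I would extract this from $(KO)$ as follows: monotonicity of $f$ gives $F(s) \leq s f(s)$, so $(KO)$ yields $\int^{+\infty} ds/\sqrt{s f(s)} < +\infty$, which in turn forces $f(s)/s \to +\infty$; hence $1/f(s) \leq 1/\sqrt{s f(s)}$ for $s$ large, and integrability transfers. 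Thus $\phi_\infty$ is characterized implicitly by $\int_{\phi_\infty(t)}^{+\infty} ds/f(s) = t$, still satisfies $\phi_\infty' = -f(\phi_\infty)$, and blows up as $t \to 0^+$; so $w_\infty \in C^2(\R^D)$ is finite on $\R^D$, tends to $+\infty$ at infinity, and the subsolution inequality passes to the limit. The main obstacle is precisely this finiteness of $\phi_\infty$; all other steps (existence of $U$, ODE well-posedness, the subsolution computation, monotonicity in $\beta$) are routine once the composition ansatz is in place.
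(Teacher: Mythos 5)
Your construction is the paper's, phrased through the ODE: setting $w_\beta=\phi_\beta\circ U$ with $\phi_\beta'=-\bar f(\phi_\beta)$, $\phi_\beta(0)=\beta$ is exactly the paper's implicit definition $\int_{w_\beta(x)}^\beta ds/\bar f(s)=U(x)$, and your chain-rule computation of $\Delta w_\beta$ is the inverse-function form of theirs. The only genuine departure is that you prove $\int^{+\infty}ds/\bar f(s)<+\infty$ directly from $(KO)$ (via $F(s)\le s\bar f(s)$ and $\bar f(s)\ge s$ for large $s$) rather than citing Lair's lemma as the paper does; this is correct and makes the proof self-contained, though the claim that $\int^{+\infty}ds/\sqrt{s\bar f(s)}<+\infty$ forces $\bar f(s)/s\to+\infty$ deserves a short justification exploiting the monotonicity of $\bar f$ (on any interval $[s_n/2,s_n]$ with $\bar f(s_n)\le Cs_n$ one has $\bar f(s)\le 2Cs$, giving a uniform lower bound for the integral over that interval).
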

\begin{proof}
Let $\bar f\in C^1[0,+\infty)$ be an increasing function such that
$$
\bar f\geq f,\quad  \bar f(0)=0,\quad\mbox{ and }\quad \bar f>0 \mbox{ in } (0,+\infty).
$$
Since $f$ satisfies $(KO)$, so does $\bar f$. Next by \cite[Lemma 1]{lair} (see also \cite{dong}) we have
\begin{equation}\label{bar0}
\int^{+\infty} \frac{1}{\bar f(s)}ds<+\infty.
\end{equation}
Further, using $\bar f(0)=0$ and \eq{bar0}, we derive that for all $0<\beta\leq +\infty$ the mapping
$$(0,\beta)\ni t\mapsto \int_t^{\beta}\frac{ds}{\bar f(s)}\in (0,+\infty)$$
is bijective. Therefore, for any $\beta\in(0,+\infty]$,
there is a unique
$$w_{\beta}:\RR^D\rightarrow (0,\beta)$$ such that
\begin{equation}\label{beta1}
\int^\beta_{w_{\beta}(x)}\frac{ds}{\super f(s)}=U(x)\quad\mbox{ for all } x\in\RR^D,
\end{equation}
where $U$ is given by \eqref{condH}.
Clearly, $w_\beta$ is increasing with respect to $\beta$ and $\lim_{\beta\to+\infty}w_{\beta}(x)=w_{\infty}(x)$ for all $x\in\R^D$. 
Now,
$$
\nabla U(x)=-\frac{1}{\bar f(w_\beta(x))}\nabla w_\beta(x)\quad\mbox{ in }\R^D
$$
and
$$
\rho=-\Delta U=\frac{1}{\bar f(w_\beta)}\Delta w_\beta-\frac{\bar f'(w_\beta)}{\bar f^2(w_\beta)}|\nabla w_\beta|^2\leq \frac{1}{\bar f(w_\beta)}\Delta w_\beta\quad\mbox{ in }\R^D.$$
Hence, $w_\beta$ satisfies \eq{beta}.
\end{proof}
\subsection{ Proof of Theorem \ref{existence} and Remark \ref{remark existence}}
Let us start with the simpler case $\beta<+\infty$ (and $\beta\ge a$ if $f$ satisfies only $(P_{f})$). Observe that the functions $\sub u=w_{\beta}$ given by Proposition \ref{w}  and $\super u=\beta$ are respectively a sub and a supersolution to the problem
$$
\left\{
\begin{aligned}
\Delta  u&= \rho(x)f(u)&&\quad\mbox{ in } B_R, \\
u& = w_\beta &&\quad\mbox{ on } \partial B_R,
\end{aligned}
\right.
$$
where $R>0$. By Proposition A.1, the above problem has a minimal solution $u_{R}$ relative to $w_{\beta}$. In particular,
$$w_{\beta}\le u_{R}\le \beta.$$ By standard elliptic regularity, a sequence $\{u_{R_{n}}\}$ converges in $C^2_{loc}(\R^D)$ to a solution $u_{\beta}$ of \eqref{els} that satisfies \eqref{limitbeta}. It remains to prove that $u_{\beta}$ is minimal. By Proposition A.1, it suffices to prove that any supersolution $\super u$ of \eqref{els}-\eqref{limitbeta} verifies $\super u\ge w_{\beta}$. From the proof of Proposition \ref{w}, there exists an increasing function $\super f\ge f$ such that
\begin{equation}\label{eqbbb}
\Delta w_{\beta}\ge \rho(x)\super f(w_{\beta})\qquad\text{in $\R^D$,}
\end{equation}
while clearly $\super u$ satisfies the reverse inequality. Since $\super f$ is increasing, it follows from the maximum principle that $\super u\ge w_{\beta}$, as desired.

Now, let us turn to the remaining case $\beta=+\infty$.
For any $R>0$, $\sub u=w_{\infty}$ and $\super u= \| w_{\infty}\|_{L^\infty(B_{R})}$ are respectively a sub and a supersolution to the problem
$$
\left\{
\begin{aligned}
\Delta  u&= \rho(x)f(u)&&\quad\mbox{ in } B_R, \\
u& = w_\infty &&\quad\mbox{ on } \partial B_R.
\end{aligned}
\right.
$$
By Proposition A.1, the above problem has a minimal solution $u_R$ relative to $w_\infty$, and for all $R>R'>0$,
\begin{equation}\label{wwwR}
w_\infty\leq u_{R}\leq u_{R'} \quad\mbox{ in }B_R.
\end{equation}
 Let us prove that the family $\{u_{R}:{R\ge1}\}$ is uniformly bounded on compact sets of $\R^D$. To do so, it suffices to prove that given $x\in\R^D$, $\{u_{R}:R\ge1\}$ remains bounded in some neighborhood of $x$. If $\rho(x)>0$, there exists $r=r_{x}>0$ such that $m_{r}=\inf_{B(x,r)}\rho>0$.
By Theorem 1.3 in \cite{ddgr},  there exists $U_r$, the minimal solution (relative to $w_\infty$) to the problem
\begin{equation}\label{bbusR}
\left\{
\begin{aligned}
\Delta  U_r&= m_{r}f(U_r)&&\quad\mbox{ in }B(0,r), \\
 U_r& = +\infty &&\quad\mbox{ on } \partial B(0,r).
\end{aligned}
\right.
\end{equation}
By Proposition A.1, $u_{R}(y)\le U_{r}(y-x)$ for $y\in B(x,r)$ and $R\ge r$. In particular, $\{u_{R}\}$ remains uniformly bounded in the ball $B(x,r/2)$.  Assume now that $\rho(x)=0$. By the assumption $(P_{\rho})$, there exists a bounded domain $\Omega$ containing $x$ such that $\rho\vert_{\partial\Omega}>0$. Using again the barrier given by \eqref{bbusR} at every point of $\partial\Omega$, we deduce that $\{u_{R}\}$ remains uniformly bounded by some constant $K$ in a neighborhood of $\partial\Omega$. By the assumption $(P_{f})$, there exists $a\ge0$ such that $f(t)\ge0$ for $t\ge a$. In particular, for any $R\ge1$, $u_{R}$ is subharmonic in $\Omega\cap[u_{R}>a]$. It follows from the maximum principle that $u_{R}\le\max\{a,K\}$ in $\Omega$.
So the family $\{u_{R}:{R\ge1}\}$ is uniformly bounded on compact sets of $\R^D$ and satisfies \eqref{wwwR}. By elliptic regularity, as $R\to+\infty$, $u_R$ converges in $C^2_{loc}(\R^D)$ to a solution $u_{\infty}$ of \eqref{els} such that $u_{\infty}\ge w_{\infty}$. It follows that $u_{\infty}$ is an ELS. To show the minimality of $u_{\infty}$, take a supersolution $\super u$ such that $\lim_{\vert x\vert\to+\infty} \super u(x)=+\infty$. From \eqref{eqbbb} and the maximum principle, we infer that $\super u\ge w_{\beta}$ for all $\beta<+\infty$. Letting $\beta\to+\infty$, we deduce that $\super u\ge w_{\infty}$. By Proposition A.1, it easily follows that $\super u\ge u_{\infty}$, as desired.
\hfill\qed
\begin{remark} If $f$ is nondecreasing we can simply work with $f$ instead of $\bar f$ in the definition of $w_\infty$ given in Proposition \ref{w}.
In this case, from \eq{beta1} and the fact that any  ELS $u$ of \eq{els} satisfies $u\geq w_\infty$ we find the following implicit lower bound on the growth of $u$ at infinity
$$
\int_{u(x)}^{+\infty} \frac{ds}{f(s)}\leq U(x)\qquad\text{for all $x\in\R^D$},
$$
where $U$ is the solution to  \eqref{condH}.
\end{remark}

\section{All solutions have a limit at infinity}
In this section, we prove Theorems \ref{prop limit} and \ref{tunbounded}. We begin with the case of bounded solutions.

\subsection{Proof of Theorem \ref{prop limit}} Let $u$ be a bounded solution of \eq{els}.
 By Lemma B.1, the unique solution to \eqref{condH} is given by
$$
U(x)=c_{D}\int_{\R^D}\vert x-y\vert^{2-D}\rho(y)\;dy,
$$
where $c_{D}\vert x\vert^{2-D}$ is the fundamental solution of the Laplace operator. Since $u$ is bounded and $f$ is continuous, the function $V$ defined for all $x\in\R^D$ by
$$
V(x)=c_{D}\int_{\R^D}\vert x-y\vert^{2-D}\rho(y)f(u(y))\;dy,
$$
 satisfies
$$
\vert V\vert \le C U\qquad\mbox{in }\R^D,
$$
for some constant $C>0$. Since $\lim_{\vert x\vert\to+\infty}U(x)=0$, it follows from Lemma B.1 (applied to $h=\rho[f(u)+ \| f(u) \|_{L^\infty(\R^D)}]$) that $V$ solves
\begin{equation*}
\left\{
\begin{aligned}
-\Delta V &=\rho(x) f(u)&\quad\text{in $\R^D$,}\\
\lim_{\vert x\vert\to+\infty}V(x)&=0.&
\end{aligned}
\right.
\end{equation*}
Hence, $u+V$ is a bounded harmonic function in $\R^D$. By Liouville's theorem, $u+V$ must be equal to a constant $\beta$. Since $u>0$, we derive $\beta\ge0$. For $r>0$, let
$$
\super u(r)=\fint_{\partial B_{r}(0)}u\;d\sigma.
$$
Since $u$ is subharmonic, it follows that $\super u$ is a nondecreasing function of $r$. This implies that $\beta>0$, as requested.
\hfill\qed

Next, we deal with unbounded solutions to \eq{els}. Before proving Theorem \ref{tunbounded} we need two auxiliary results.
\begin{lemma}\label{sandwich} Make the same assumptions as in Theorem \ref{existence}. Suppose in addition that  $\rho$ is radial and nonincreasing. Then, for any function $u$ such that
$$
\Delta  u\ge \rho(|x|)f(u)\quad\mbox{ in } \R^D, \\
$$
there exists a radial function $\super u$ solving \eqref{els} and such
that $u\leq \super u$ in $\R^D$.
\end{lemma}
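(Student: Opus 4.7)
I would prove the lemma by exhaustion: for each $R>0$ I build a radial solution on $B_R$ lying above $u\rvert_{B_R}$, and then pass to the limit $R\to+\infty$. Since $\Delta u\ge\rho f(u)\ge 0$, $u$ is subharmonic, so
$$M(R):=\max_{\overline B_R} u=\max_{\partial B_R}u$$
is finite and nondecreasing in $R$. On $B_R$ I consider the Dirichlet problem
$$\Delta v=\rho(|x|)f(v)\quad\text{in }B_R,\qquad v=M(R)\quad\text{on }\partial B_R.$$
The constant $M(R)$ is a supersolution (because $\rho f(M(R))\ge 0$), and both the zero function and $u\rvert_{B_R}$ are subsolutions that lie below $M(R)$ on $\partial B_R$. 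Proposition~A.1 then produces, in the box $[0,M(R)]$, a maximal solution $V_R$ and a solution $v_R$ satisfying $u\le v_R\le M(R)$ in $B_R$. By maximality, $V_R\ge v_R\ge u$ in $B_R$.

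The radial symmetry of $V_R$ follows from the rotation-invariance of the data: for every rotation $T$ of $\RR^D$ about the origin, $V_R\circ T$ solves the same boundary value problem in the same box $[0,M(R)]$ (since $\rho$ is radial and the boundary datum is constant), so uniqueness of the maximal solution forces $V_R\circ T=V_R$; thus $V_R$ is radial.

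It remains to pass to the limit $R\to+\infty$. Since each $V_R$ is radial, nondecreasing in the radial variable (radial $+$ subharmonic), and satisfies
$$V_R''+\tfrac{D-1}{r}V_R'=\rho(r)f(V_R),\qquad V_R'(0)=0,$$
I multiply by $V_R'\ge 0$ and integrate; using that $\rho$ is nonincreasing, this energy computation gives
$$\int_{V_R(0)}^{V_R(r)}\frac{dt}{\sqrt{F(t)-F(V_R(0))}}\le\sqrt{2\rho(0)}\,r.$$
Combined with $(KO)$, which makes the left-hand side a bounded quantity as $V_R(r)\to+\infty$, and with the radial form of $(H_\rho)$, a refinement of this estimate (using also $V_R(R)=M(R)$ and the decay of $\rho$) produces locally uniform upper bounds on $V_R$ as $R\to+\infty$. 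Standard elliptic regularity and a diagonal extraction then give a subsequence converging in $C^2_{\rm loc}(\RR^D)$ to a function $\bar u$ that is radial (as a limit of radial functions), solves \eqref{els} in $\RR^D$, and satisfies $\bar u\ge u$ in $\RR^D$ (by passing to the limit in $V_R\ge u$ on $B_R$).

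\textbf{Main obstacle.} The delicate step is the local uniform upper bound on $V_R$ as $R\to+\infty$: when $u$ is unbounded, $M(R)\to+\infty$ and the trivial bound $V_R\le M(R)$ is useless. The radial Keller--Osserman energy estimate above, which crucially exploits both the monotonicity of $\rho$ and the condition $(KO)$, is what prevents $V_R$ from blowing up on compact sets and makes the limit $\bar u$ a bona fide solution of \eqref{els}.
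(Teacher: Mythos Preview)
Your overall exhaustion strategy matches the paper's, and your rotation-invariance argument for the radiality of the maximal solution $V_R$ is a valid alternative to the paper's use of Gidas--Ni--Nirenberg (which is where the paper actually uses that $\rho$ is nonincreasing). Note, however, that Proposition~A.1 as stated produces only \emph{minimal} solutions relative to a subsolution, so your appeal to it for a maximal solution is at best a notational liberty.

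The substantive gap is the locally uniform upper bound. Your energy inequality
\[
\int_{V_R(0)}^{V_R(r)}\frac{dt}{\sqrt{F(t)-F(V_R(0))}}\le\sqrt{2\rho(0)}\,r
\]
is correct, but it only controls the growth of $V_R$ relative to its central value $V_R(0)$; it gives no bound on $V_R(0)$ itself. If $V_R(0)\to+\infty$ along a subsequence, the left side tends to $0$ by $(KO)$ and the inequality becomes vacuous. Your promised ``refinement'' via $(H_\rho)$ and $V_R(R)=M(R)$ is never carried out, and in fact $(H_\rho)$ plays no role in the paper's proof of this lemma. The paper obtains the bound by a different mechanism: it works with the \emph{minimal} solution $u_R^n$ relative to $u$ (boundary data $n$, then $n\to\infty$), and on any fixed $B_{R_0}\subset B_R$ it compares $u_R^n$ with the boundary blow-up solution $U_{R_0}$ of $\Delta U=m_{R_0}f(U)$, $U=+\infty$ on $\partial B_{R_0}$, whose existence under $(KO)$ alone is Theorem~1.3 of \cite{ddgr}. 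The inequality $u_R^n\le U_{R_0}$ on $B_{R_0}$ is exactly the local-supersolution clause of Proposition~A.1, and it requires $u_R^n$ to be minimal. Your maximal solutions $V_R$ enjoy no such one-sided comparison with supersolutions, so this barrier is unavailable to you as written; to repair the argument you would have to either switch to minimal solutions (and then use Gidas--Ni--Nirenberg for radiality, as the paper does) or supply an independent Keller--Osserman pointwise estimate valid for arbitrary subsolutions.
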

\proof Let $u$ satisfy the above differential inequality. We fix $R>0$ and let $N=N(u,R)\geq 1$ be such that $\max_{B_R}u<N$.
By Proposition A.1, for all $n\geq N$, there exists a minimal solution $u_R^n$ relative to $u$ of the problem
$$
\left\{
\begin{aligned}
\Delta  u_R^n&= \rho(|x|)f(u_R^n)&&\quad\mbox{ in } B_R, \\
u_R^n& = n &&\quad\mbox{ on } \partial B_R,
\end{aligned}
\right.
$$
such that $u\leq u^n_R<n$ in $B_R$.
Since $r\mapsto \rho(r)$ is nonincreasing, the Gidas-Ni-Nirenberg symmetry result (Theorem 1' in \cite{gnn}) implies that $u_R^n$ is radially symmetric.
Let $m_{R}=\inf_{B_{R}}\rho>0$ and $U=U_R$ be the minimal solution to relative to $u$ of
$$
\left\{
\begin{aligned}
\Delta  U&= m_{R}f(U)&&\quad\mbox{ in } B_R, \\
U& = +\infty &&\quad\mbox{ on } \partial B_R,
\end{aligned}
\right.
$$
Applying Proposition A.1, we have
\begin{equation}\label{mis0}
u\leq u_R^n\leq U_R\quad\mbox{ in }B_R.
\end{equation}
Applying further Proposition A.1, we find
\begin{equation}\label{mis1}
u_R^n\leq u_R^{n+1},\quad u^{n}_{R+1}\leq u_R^n\quad\mbox{ in }B_R, \mbox{ for all }\, n\geq N.
\end{equation}
Hence, the family $\{u_{R}^n\;:\; n\ge 1, R\ge1\}$ is monotone in $n$ and in $R$ and uniformly bounded on compact sets of $\R^D$. By elliptic regularity, letting $n\to+\infty$ and then $R\to+\infty$, we deduce that $u_{R}^n$ converges to a radial function $\super u$ solving \eqref{els} and such that $\super u\ge u$.
\hfill\qed

\begin{lemma}\label{propPhi}  Make the same assumptions as in Theorem \ref{existence} and suppose that $u\longmapsto f(u)/u$ is increasing in $I=[M,+\infty)$. Then, the function $\Phi$ defined by \eqref{fiddgr2} is decreasing in $I$ and $\lim_{+\infty}\Phi=0$. In particular, there exist $\varepsilon>0$ such that $\Phi:I\to(0,\varepsilon)$ is invertible.
\end{lemma}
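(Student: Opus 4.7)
I would verify in sequence four properties of $\Phi$: well-definedness on $I$, strict monotonicity, vanishing at infinity, and invertibility onto its image. Well-definedness is routine: near $t=u$, $F(t)-F(u)\sim f(u)(t-u)$ gives an integrable square-root singularity, while at infinity a comparison $F(t)-F(u)\ge\tfrac12 F(t)$ for $t\gg u$ (established below) reduces the convergence to $(KO)$.

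\textbf{Monotonicity.} First, the hypothesis that $u\mapsto f(u)/u$ is increasing on $I$ actually forces $f$ itself to be strictly increasing there: for $M\le u_1<u_2$ one has $f(u_2)\ge(u_2/u_1)f(u_1)>f(u_1)$, since $f(u_1)>0$ and $u_2/u_1>1$. Using the substitution $t=u+s$, rewrite
\[
\Phi(u)=\int_0^{+\infty}\frac{ds}{\sqrt{F(u+s)-F(u)}}.
\]
For $u_1<u_2$ in $I$ and $s>0$, the identity
\[
\bigl[F(u_2+s)-F(u_2)\bigr]-\bigl[F(u_1+s)-F(u_1)\bigr]=\int_{u_1}^{u_2}\bigl[f(r+s)-f(r)\bigr]\,dr
\]
is strictly positive because $r,r+s\in I$ and $f$ is strictly increasing there. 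Hence the integrand defining $\Phi$ is strictly smaller at $u_2$ than at $u_1$, so $\Phi(u_2)<\Phi(u_1)$.

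\textbf{Limit at infinity (the hard step).} I would split $\Phi(u)=A(u)+B(u)$ with $A(u)=\int_u^{2u}$ and $B(u)=\int_{2u}^{+\infty}$. For $A(u)$, the inequality $f(s)\ge(s/u)f(u)$ on $[u,2u]$ (a direct consequence of the monotonicity of $f(s)/s$) yields $F(t)-F(u)\ge\frac{f(u)}{2u}(t^2-u^2)$, and the substitution $t=u\cosh\theta$ gives $A(u)\le C\sqrt{u/f(u)}$. This tends to zero because $(KO)$ forces $f(u)/u\to+\infty$: otherwise $f(u)\le Cu$ would imply $F(u)\lesssim u^2$, contradicting the integrability of $1/\sqrt F$ at $+\infty$. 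For $B(u)$, applying the monotonicity of $f(s)/s$ separately on $[M,t/2]$ (which gives $F(t/2)\le tf(t/2)/4+O(1)$) and on $[t/2,t]$ (which gives $F(t)-F(t/2)\ge 3tf(t/2)/4$) yields $F(t)-F(t/2)\ge\tfrac23 F(t)$ for $t$ large. Since $u\le t/2$ on the range of integration, $F(t)-F(u)\ge F(t)-F(t/2)\ge\tfrac23 F(t)$, so $B(u)\le\sqrt{3/2}\int_{2u}^{+\infty}dt/\sqrt{F(t)}\to 0$ by $(KO)$.

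\textbf{Invertibility and main obstacle.} Continuity of $\Phi$ on $I$ follows from dominated convergence applied via the bounds above; combined with strict monotonicity and the vanishing limit, this shows $\Phi$ maps $I$ continuously and bijectively onto $(0,\Phi(M)]$, and any $\varepsilon>\Phi(M)$ delivers the stated invertibility. The principal technical hurdle is the asymptotic vanishing: one must extract from the mild hypothesis on $f(u)/u$ both the near-diagonal bound (giving $u/f(u)\to 0$) and the far-field comparison $F(t)-F(u)\gtrsim F(t)$ that unlocks $(KO)$; it is the split at scale $t=2u$ that makes these two controls compatible.
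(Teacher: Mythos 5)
Your proof is correct, but it takes a genuinely different route from the paper's. The paper performs the change of variable $s=F(t)$, obtaining $\Phi(u)=\int_0^{+\infty}\frac{ds}{\sqrt{s}\,f(F^{-1}(s+F(u)))}$; since $f\circ F^{-1}$ is increasing and unbounded, the integrand is pointwise decreasing in $u$ and tends to $0$, so monotonicity of $\Phi$ and $\lim_{u\to+\infty}\Phi(u)=0$ both follow at once by monotone convergence. You instead shift to $t=u+s$, prove pointwise monotonicity of the integrand via the identity $[F(u_2+s)-F(u_2)]-[F(u_1+s)-F(u_1)]=\int_{u_1}^{u_2}[f(r+s)-f(r)]\,dr>0$, and obtain the vanishing limit by splitting $\Phi$ at the scale $t=2u$: the near piece is $O(\sqrt{u/f(u)})$ with $f(u)/u\to+\infty$ forced by $(KO)$, and the far piece is reduced to $(KO)$ via the comparison $F(t)-F(t/2)\ge\tfrac{2}{3}F(t)$ for large $t$. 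The paper's single substitution absorbs all of this auxiliary analysis into the composite $f\circ F^{-1}$ and is markedly shorter; your split-scale approach is longer and requires you to extract both $f(u)/u\to+\infty$ and the off-diagonal comparison $F(t)-F(u)\gtrsim F(t)$ separately, but it is more elementary (no inverse function) and, as a by-product, yields a quantitative decay rate $\Phi(u)\lesssim\sqrt{u/f(u)}+\int_{2u}^{+\infty}dt/\sqrt{F(t)}$ that the paper's argument does not display.
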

\proof 
Let us first note that $f$ is increasing in $I$ and consider the change of variable $s=F(t)$. Then $t=F^{-1}(s)$ and
$$
\Phi(u)=\int_{F(u)}^{+\infty}\frac{(F^{-1})'(s)}{\sqrt{s-F(u)}}ds=\int_0^{+\infty}\frac{ds}{\sqrt{s}f(F^{-1}(s+F(u)))}.
$$
Thus $\Phi$ is decreasing  in $I$ and by monotone convergence, we have $\lim_{u\ri+\infty }\Phi(u)=0$.
\hfill\qed

\subsection{Proof of Theorem \ref{tunbounded}}
Let $u$ be an unbounded solution of \eq{els}. From \eqref{rhoc} we can find $c\in (0,1)$ and a positive nonincreasing function $\bar\rho$ such that
$\rho(x)\geq \bar{\rho}(|x|)$ in $\R^D$ and $\bar\rho(r)=cr^{-\alpha}$ for $r\geq 1$.
We next apply Lemma \ref{sandwich}  (for $\bar \rho$ instead of $\rho$) to deduce the existence of $v=v(|x|)$ such that $u\leq v(|x|)$ in $\RR^D$ and
$$
v''+\frac{D-1}{r}v'=\bar\rho(r) f(v),\quad r\geq 0.
$$
This implies that $r^{D-1}v'$ and $v$ are nondecreasing. Since $u$ is unbounded it follows that $v(r)\ri+\infty$ as $r\ri+\infty$.
Also we have
\begin{equation}\label{vD}
cr^{-\alpha}f(v)\leq v''+\frac{D-1}{r}v'\quad\mbox{ for all } r\geq 1.
\end{equation}
We next multiply with $rv'$ in \eq{vD} and integrate over $[r,s]$, where $1\leq r\leq s\leq 2r$. We obtain
$$
c\int_r^s t^{1-\alpha}f(v)v'dt\leq \frac{s}{2}v'^2(s)+\frac{2D-3}{2}\int_r^s v'^2(t)dt,
$$
and so
\begin{equation}\label{dd1}
\tilde{c}s^{1-\alpha}[F(v(s))-F(v(r))]\leq \frac{s}{2}v'^2(s)+\frac{2D-3}{2} \int_r^s v'^2(t)dt,
\end{equation}
for all $r\leq s\leq 2r$. Using the fact that $t\longmapsto t^{D-1}v'(t)$ is nondecreasing we have
$$
 \int_r^s v'^2(t)dt\leq [s^{D-1}v'(s)]^2\int_r^s t^{2-2D}dt\leq C(D) s v'^2(s),
$$
for all $1\leq r\leq s\leq 2r$. This last estimate combined with \eq{dd1} yields
$$
s^{-\alpha}[F(v(s))-F(v(r))]\leq  Cv'^2(s),
$$
for all $1\leq r\leq s\leq 2r$. Therefore
$$
cs^{-\alpha/2}\leq \frac{v'(s)}{\sqrt{ F(v(s))-F(v(r))  }},
$$
for all $1\leq r\leq s\leq 2r$. Integrating over $[r,2r]$ we find
$$
cr^{1-\alpha/2}\leq \int_{v(r)}^{v(2r)} \frac{dt}{\sqrt{ F(t)-F(v(r)) }}\leq \Phi(v(r))\quad\mbox{ for all }  r\geq 1.
$$
So, for $r$ large enough, we may use Lemma \ref{propPhi} and apply $\Phi^{-1}$ inverse to the above inequality. It follows that
\begin{equation}\label{dd2}
u(x)\leq v(r)\le \Gamma(r):=\Phi^{-1}(cr^{1-\alpha/2})\quad\mbox{ for all } x\in\partial B_r.
\end{equation}
Let us note that $u$ satisfies $\Delta u=a(x)u$ in $\RR^D$ where
$$
a(x)=\rho(x)\frac{f(u)}{u}.
$$
Using now \eq{dd2} together with \eq{rhoc} and \eq{fiddgr} we find
$$
a(x)\leq cr^{-\alpha}\frac{f(\Gamma(r))}{\Gamma(r)}\leq \frac{C}{r^2},
$$
for all $r>1$ large and $x\in \partial B_r$. We next make use of Harnack's inequality \cite[Theorem 8.2]{gt} to derive the existence of $C>0$ independent of $u$ such that for all $r>1$ large we have
$$
\sup_{\partial B_r} u\leq C\inf_{\partial B_r} u.
$$
Since $u$ is subharmonic and unbounded it follows that $u(x)\ri+\infty$ as $|x|\ri+\infty$. This finishes the proof of Theorem \ref{tunbounded}. \hfill\qed

\section{Uniqueness}




\subsection{Proof of Theorem \ref{uniq}}
Let $u$ be a radial ELS of \eq{els}. We set
\begin{equation}\label{LM}
v(t)=u(|x|)\,,\quad\mbox{  where } t=|x|^{1-\frac{\alpha}{2}}.
\end{equation}
Then, $v$ solves
\begin{equation}\label{vequation}
\left\{
\begin{aligned}
\frac{d^2v}{dt^2}+\frac{K}{t}\frac{dv}{dt}&= \frac{4}{(\alpha-2)^2}f(v(t)),&{\text{for $t\in(0,1],$}} \\
\lim_{t\to0^+} v(t)&=+\infty, &
\end{aligned}
\right.
\end{equation}
where $K:=\frac{\alpha-2D+2}{\alpha-2}\in(-\infty,1)$.  Note that $u=u(r)$ is a strictly increasing function of $r=\vert x\vert$. In particular, the mapping $v=v(t)$ is invertible. Let $t=t(v)$ denote its inverse mapping and let
$$V=-\frac{dv}{dt}(t(v)),$$
seen as a new function of the variable $v$. Up to replacing $f$ by $\frac{(\alpha-2)^2}{4}f$, \eqref{vequation} is equivalent to
\begin{equation}\label{new}
\left\{
\begin{aligned}
\displaystyle V\frac{dV}{dv}&-\frac{K}{t}V =f(v),&\text{for $v\in[a,+\infty),$} \\
\displaystyle t(v) &=\int_v^{+\infty}\frac{ds}{V(s)},&
\end{aligned}
\right.
\end{equation}
where  {$a=u(1)$}.

\noindent{\bf Step 1.} The mapping $v\to (t^KV)(v)$ is increasing.
Indeed,
$$
\frac{d}{dv}[t^KV] = -Kt^{K-1}+t^K\frac{dV}{dv}=t^K \left[\frac{dV}{dv}-\frac{K}{t}\right]
=t^K\frac{f(v)}{V}>0.
$$
\noindent{\bf Step 2.} Reduction to the radial case. Take two positive, radially symmetric and decreasing functions $\rho_{1},\rho_{2}$ and $R>0$ large such that
$$
\rho_{1}(x)=\rho_{2}(x)=\rho(x)=\vert x\vert^{-\alpha}\quad\mbox{ for }\vert x\vert>R
$$
 and $\rho_{2}\le \rho\le\rho_{1}$ in $\R^D$. Rescaling the space variable if necessary, we may always assume that $R=1$.
Now let $u_{1}$ be the minimal ELS to
\begin{equation*}
\Delta u=\rho_{1}(x) f(u)\,,\;\;
u>0 \qquad\text{in $\R^D$ $(D\ge3)$,}
\end{equation*}
given by Theorem \ref{existence}. Let $u_{2}$ be any ELS of \eq{els}. We want to prove that
$$
\lim_{\vert x\vert\to+\infty}[u_{2}(x)-u_{1}(x)]=0.
$$
By minimality, $u_{1}$ is radial and $u_{1}\le u_{2}$. We can also assume that $u_2$ is radial, otherwise by Lemma \ref{sandwich}  (for $\rho=\rho_2$), there exists a radial ELS  $\bar u_{2}$ of
\begin{equation*}
\Delta u=\rho_{2}(x) f(u)\,,\;\;
u>0 \qquad\text{in $\R^D$ $(D\ge3)$,}
\end{equation*}
such that $\bar u_{2}\ge u_{2}$ and we only need to replace $u_2$ by $\bar u_2$ in what follows.

\noindent Let $t_{i}, V_{i}$, $i=1,2$ denote the solutions to \eqref{new}
associated to $u_{1}$ and $u_{2}$ respectively. Then,

\noindent{\bf Step 3.} $V_{1}\ge V_{2}$ and $t_{1}\le t_{2}$ for $v$ sufficiently large.

Since $u_1\leq  u_2$, their inverse mappings satisfy $r_2\le r_{1}$, which implies $t_1\leq t_2$. Let us prove that $V_2\leq V_1$ for large $v$.
We argue by contraction, assuming there exists $\{u_k\}\ri +\infty$ such that $V_1(u_k)<V_2(u_k)$.
Since $t_1\leq t_2$ and $dt_{i}/dv = -1/V_{i}$, there exists another sequence $\{\tilde u_k\}\ri +\infty$ such that $V_2(\tilde u_k)\le V_1(\tilde u_k)$. So, $V_{1}-V_{2}$ changes sign infinitely many times. By the intermediate value theorem, $V_{1}-V_{2}$ vanishes infinitely many times. By the mean value theorem, we obtain at last an unbounded  sequence $\{w_{n}\}$ such that
$$\frac{d(V_1-V_2)}{du}(w_n)=0\quad\mbox{and}\quad sign(V_1(w_n)-V_2(w_n))=(-1)^{n}.$$
Using \eqref{new}, we have
$$0=\frac{d(V_1-V_2)}{du}(w_n)=\left(\frac{K}{t_1}-\frac{K}{t_2}\right)(w_n)-f(w_n)\left(\frac{1}{V_1}-\frac{1}{V_2}\right)(w_n).$$
The first term in the right-hand side has the sign of $K$, while the second term has the sign of $(-1)^n$, which is a contradiction.

At this stage, we need to distinguish the cases $K<0$ and $K\in[0,1)$. We begin with the latter.

\noindent{\bf Step 4a.} Assume $K\in[0,1)$. There exists a constant $C>0$ such that
$$
0\le t_{1}^KV_{1}-t_{2}^KV_{2}\le C\qquad\text{ for large $v$.}
$$
Let
$$
h= t_{1}^KV_{1}-t_{2}^KV_{2}.
$$
Then, using Step 3,
$$
\frac{dh}{dv}=f(v)\left[\frac{t_{1}^K}{V_{1}} - \frac{t_{2}^K}{V_{2}}\right]\le0.
$$
It follows that $h$ is bounded above and has constant sign for large $v$. Assume by contradiction that $h(v)<0$ for large $v$. Then, $t_{1}^KV_{1}< t_{2}^KV_{2}$, that is,
$$
-\frac{d}{dv}\left[\frac{t_{2}^{1-K}}{1-K}\right] <
-\frac{d}{dv}\left[\frac{t_{1}^{1-K}}{1-K}\right].
$$
Integrating between $v$ and $+\infty$ yields
$$
t_{2}^{1-K}< t_{1}^{1-K},
$$
contradicting $t_{1}\le t_{2}$, since $K<1$.

\noindent{\bf Step 5a.} If $K\in[0,1)$, there holds
\begin{equation} \label{esti crucial} 0\le  u_{2}-  u_{1}\le C\; t_{2}^{1-K}( u_{2}).
\end{equation}
\noindent Since
$$\int_{v_1}^{+\infty} \frac{ds}{t_1^KV_1}=\frac{t^{1-K}}{1-K}=\int_{v_{2}}^{+\infty} \frac{ds}{t_2^KV_2},$$
we have, using Step 1 on the one hand and Step 4a on the other,
\begin{equation*}
\begin{split}
\frac{v_2-v_1}{t_1^K(v_2)V_1(v_2)}\leq \int_{v_1}^{v_2}\frac{ds}{t_1^KV_1}&=\int_{v_2}^{+\infty} \frac{h(s)}{(t_1^KV_1)(t_2^KV_2)}ds\\
&\leq\frac{||h||_\infty}{t_1^K(v_2)V_1(v_2)}\int_{v_2}^{+\infty} \frac{ds}{t_2^KV_2}.
\end{split}
\end{equation*}
And so,
$$0\leq v_2(t)-v_1(t) \leq \| h \|_{\infty} \frac{t_2^{1-K}(v_2(t))}{1-K}\ri 0 \mbox{ as }t\ri 0^+.$$
Eq. \eqref{esti crucial} follows, which completes the proof of Theorem \ref{uniq} in the case $K\ge0$.\\

We turn to the case $K<0$. Let $w=V^2_1-V^2_2\geq 0$ and consider the function $E$, defined for $\lambda \in [0,1]$ by
$$E(\lambda)=\displaystyle{(-K)\frac{\sqrt{2(V_1^2-\lambda w)}}{\int_u^{+\infty} \frac{d\sigma}{\sqrt{2(V_1^2-\lambda w)}}}=\frac {-KW}{T}},$$
where $W=\sqrt{2(V_1^2-\lambda w)}$ and $T=\int_v^{+\infty} \frac{ds}{W}$.\\
\noindent{\bf Step 4b.} Assume $K<0$. For $\lambda\in [0,1]$,
$$\frac{dE}{d\lambda}=K\left[\frac{w}{WT}+\frac{W}{T^2}\int_v^{+\infty} \frac{w}{W^3}ds\right]$$
and $E$ is concave.\\
With
$$\frac{dW}{d\lambda}=-\frac{w}{W}\quad\mbox{and} \quad \frac{dT}{d\lambda}=\int_v^{+\infty} \frac{w}{W^3}ds,$$
we obtain easily the expression of the first derivative of $E$. The second derivative of $E$ is given by
\begin{equation}\label{D2E}
\begin{split}
\frac{d^2E}{d\lambda^2}=&{K}\left[\frac{w^2}{W^3T}+\frac{3W}{T^2}\int_v^{+\infty} \frac{w^2}{W^5}ds\right]\\
&-{K}\left[\frac{2w}{WT^2}\int_v^{+\infty} \frac{w}{W^3}ds+\frac{2W}{T^3}\left(\int_v^{+\infty} \frac{w}{W^3}ds\right)^2\right].
\end{split}
\end{equation}
By the Cauchy-Schwarz inequality, we have
$$\left(\int_v^{+\infty} \frac{w}{W^3}ds\right)^2\leq \left(\int_v^{+\infty} \frac{ds}{W}\right)\left(\int_v^{+\infty} \frac{w^2}{W^5}ds\right)=T\int_v^{+\infty} \frac{w^2}{W^5}ds.$$
Hence, the second term in the right-hand side of \eqref{D2E} is smaller than
$$-K\left[\frac{2w}{WT^{\frac32}}\left(\int_v^{+\infty} \frac{w^2}{W^5}ds\right)^{\frac12}+\frac{2W}{T^2}\int_v^{+\infty} \frac{w^2}{W^5}ds\right].$$
Plugging in \eqref{D2E}, we obtain
$$\frac{d^2E}{d\lambda^2}\leq{K}\left[\frac{w}{W^{\frac32}T^{\frac12}}-\frac{W^{\frac12}}{T}\left(\int_v^{+\infty} \frac{w^2}{W^5}ds\right)^\frac12\right]^2\leq 0.$$
\noindent{\bf Step 5b.} If $K<0$, there holds
\begin{equation}\label{estKn}
0\le  u_2-u_1\le C\,t_2( u_2),
\end{equation}
where $C$ is a positive constant.\\
By \eqref{new}, we have
\begin{equation}\label{(2)}
0=\frac{d}{dv}(V^2_1-V^2_2)-2K\left(\frac{V_1}{t_1}-\frac{V_2}{t_2}\right)=\frac{dw}{dv}+E(0)-E(1).\\
\end{equation}
Therefore,
$$
\frac{dw}{dv}=E(1)-E(0)\leq \frac{dE}{d\lambda}(0).
$$
By Step 4b and \eqref{(2)}, we deduce that
\begin{equation}\label{(2b)}
\frac{dw}{dv}-K\frac{w}{V_1t_1}\leq K\frac{V_1}{t_1^2}\int_v^{+\infty} \frac{w}{V_1^3}ds\leq 0.
\end{equation}
Let $q=\frac{w}{V_1}$. The derivative of $q$ is given by
\begin{equation*}
\frac{dq}{dv}=\frac{1}{V_1}\left(\frac{dw}{dv}-\frac{dV_1}{dv}\frac{w}{V_1}\right).
\end{equation*}
Since $V_1$ verifies \eqref{new}, we have
\begin{equation}\label{(3)}
\frac{dq}{dv}=\frac{1}{V_1}\left(\frac{dw}{dv}-K\frac{w}{V_1t_1}\right)-\frac{w}{V_1^3}f(v).
\end{equation}
Eqs. \eqref{(2b)}-\eqref{(3)} imply
$$\frac{dq}{dv}+q\frac{f}{V_1^2}\leq0.$$
Integrating the above inequality,
\begin{equation}\label{(4)}
q(v)\leq Ce^{-\displaystyle{\int_{v_0}^{v}\frac{f}{V_1^2}ds}}.
\end{equation}
Observe, using \eqref{new}, that the function $v\ri \frac{V^2_1}{2}-F(v)$ is decreasing. So, up to replacing $F$ by $\tilde F(v)=F(v)-F(v_0)+\frac{V_1^2(v_0)}{2}$, we have $V_1\leq \sqrt{2\tilde F}$ for $v\ge v_0$. Thus,
\begin{equation}\label{(6)}
q(v)\leq \frac{C}{\sqrt{\tilde F}}.
\end{equation}
Finally we proceed as in Step 5a. Since
$$\int_{v_1}^{+\infty} \frac{ds}{V_1}=t=\int_{v_2}^{+\infty} \frac{ds}{V_2},$$
we have
\begin{equation*}\label{fin}
\frac{v_2-v_1}{\sqrt{\tilde F(v_2)}}\leq  \int_{v_1}^{v_2} \frac{ds}{V_1}=\int_{v_2}^{+\infty} \frac{q\;ds}{V_2(V_1+V_2)}\leq  c\;q(v_2)\int_{v_2}^{+\infty} \frac{ds}{V_2}\leq \frac{c\;t_2}{\sqrt{\tilde F(v_2)}}.
\end{equation*}
Now \eqref{estKn} follows and this finishes the proof of Theorem \ref{uniq}.
\hfill\qed

\medskip

\subsection{Proof of Theorem \ref{uniqueness}}
As in the proof of Theorem \ref{uniq}, we may restrict ourselves to the radial case. Further, given a radial ELS $u$ to \eq{els} we  make the change of variable $t=\vert x\vert^{2-D}$, $v(t)=u(\vert x\vert)$. Then, $v$ solves
\begin{equation}\label{vequation2}
\left\{
\begin{aligned}
\frac{d^2v}{dt^2}&= \tilde\rho(t)f(v(t)),&\text{for $t\in(0,1],$} \\
\lim_{t\to0^+} v(t)&=+\infty, &
\end{aligned}
\right.
\end{equation}
where
$$
\tilde\rho(t)=\frac1{(D-2)^2}r^{2D-2}\rho(r),\qquad t=r^{2-D}.
$$
Letting, as in the proof of Theorem \ref{uniq}, $t=t(v)$ denote the inverse map of $v=v(t)$, and letting $V=-\frac{dv}{dt}(t(v))$, we arrive at the system
\begin{equation}\label{neweq}
\left\{
\begin{aligned}
\displaystyle V\frac{dV}{dv}& =\tilde\rho(t(v))f(v),&\text{for $v\in[a,+\infty),$} \\
\displaystyle t(v) &=\int_v^{+\infty}\frac{ds}{V(s)},&
\end{aligned}
\right.
\end{equation}
where $a=u(1)$.
Now take two radial ELS to \eq{els} $u_{i}$, $i=1,2$ and let $t_{i}, V_{i}$ denote the new unknowns associated to $u_{i}$.

\noindent{\bf Step 1.} $V=V_{i}$ satisfies
\begin{equation} \label{MariusHopital}
\lim_{v\to+\infty}\frac{V^2(v)}{F(v)}=\frac2{(D-2)^2}.
\end{equation}
Indeed, by \eqref{vequation2} and L'H\^opital's rule, we have
\begin{equation*}
\lim_{t\ri 0^+}\frac{\left(\frac{dv}{dt}\right)^2}{\frac{2}{(D-2)^2}F(v)}
=\lim_{t\ri 0^+}\frac{2\tilde{\rho}(t) f(v)}{\frac{2}{(D-2)^2}f(v)}=1,
\end{equation*}
where we used assumption \eqref{cond2}.

\noindent{\bf Step 2.} There exists a constant $C>0$ such that
\begin{equation} \label{intermediate}
\sqrt{F(v)} \vert V_{1}(v)-V_{2}(v)\vert \le C\left[
\int_{v_{0}}^v f(w)
\left(
\int_{w}^{+\infty}\frac{\vert V_{1}-V_{2}\vert}{F}ds
\right)dw +1 \right].
\end{equation}
To see this, take a large constant $v_{0}>0$ (to be fixed later on) and integrate \eqref{neweq} between $v_{0}$ and $v$:
$$
\frac12(V_{1}^2-V_{2}^2) = \int_{v_{0}}^v f(w)\left[\tilde\rho(t_{1}(w))-\tilde\rho(t_{2}(w))\right]\;dw+ c,
$$
where $c=\frac12(V_{1}^2-V_{2}^2)(v_{0})$.
Assumption \eqref{cond2}  implies that $\tilde\rho$ is Lipschitz continuous. Using this fact in the right-hand side of the above equation, and \eqref{MariusHopital} in the left-hand side, we deduce that
$$
\sqrt{F(v)} \vert V_{1}(v)-V_{2}(v)\vert \le C\left(
\int_{v_{0}}^v f(w) \vert t_{1}-t_{2}\vert\;dw +1
\right).
$$
Using the definition of $t_{i}$ and \eqref{MariusHopital} again,  we derive \eqref{intermediate}.

\noindent{\bf Step 3.} The following integral is convergent
\begin{equation} \label{intermediate2}
\int_{v_{0}}^{+\infty} \frac{\vert V_{1}-V_{2}\vert}{F}\left(
\int_{v_{0}}^{v\wedge s}f(w)\;dw
\right)\;ds.
\end{equation}
Indeed, by \eqref{MariusHopital} and $(KO)$, the integral
$$
\int_{w}^{+\infty}\frac{\vert V_{1}-V_{2}\vert}{F}\;ds
$$
is convergent.  Thus, so is the double integral
$$
\int_{v_{0}}^v f(w)\left(
\int_{w}^{+\infty}\frac{\vert V_{1}-V_{2}\vert}{F}\;ds
\right)\;dw.
$$
By Fubini's theorem, the integral in \eqref{intermediate2} is also convergent.

\noindent{\bf Step 4.} There exists  two constants $C,U_{0}>0$ such that for all $U\ge U_{0}$, and all $v\in(v_{0},U)$, we have
$$
\sqrt{F(v)}\vert V_{1}-V_{2}\vert \le C\left(
\int_{v_{0}}^{U}\vert V_{1}-V_{2}\vert\;ds +1
\right).
$$
By \eqref{intermediate} and Fubini's theorem,
\begin{equation} \label{intermediate3}
\sqrt{F(v)} \vert V_{1}(v)-V_{2}(v)\vert \le C\left[
\int_{v_{0}}^{+\infty} \frac{\vert V_{1}-V_{2}\vert}{F}\left(
\int_{v_{0}}^{v\wedge s}f(w)\;dw
\right)\;ds +1
\right].
\end{equation}
Also, by Step 3, there exists $U_{0}>0$ sufficiently large such that for all $U\ge U_{0}$,
$$
\int_{v_{0}}^{+\infty} \frac{\vert V_{1}-V_{2}\vert}{F}\left(
\int_{v_{0}}^{v\wedge s}f(w)\;dw
\right)\;ds\le 2
\int_{v_{0}}^{U} \frac{\vert V_{1}-V_{2}\vert}{F}\left(
\int_{v_{0}}^{v\wedge s}f(w)\;dw
\right)\;ds.
$$
Using this fact in \eqref{intermediate3} we find
\begin{align*}
\sqrt{F(v)} \vert V_{1}(v)-V_{2}(v)\vert &\le C\left[
\int_{v_{0}}^{U} \frac{\vert V_{1}-V_{2}\vert}{F}\left(
\int_{v_{0}}^{v\wedge s}f(w)\;dw
\right)\;ds +1
\right]\\
&\le C\left(\int_{v_{0}}^{U}\vert V_{1}-V_{2}\vert\;ds+1
\right).
\end{align*}

\noindent{\bf Step 5.} There exists a constant $C>0$  such that
\begin{equation} \label{step5}
\sqrt{F(v)} \vert V_{1}(v)-V_{2}(v)\vert\le C.
\end{equation}
Fix $\varepsilon>0$ and choose $v_{0}>0$  large enough such that
$$
\int_{v_{0}}^{+\infty}\frac{ds}{\sqrt F}<\varepsilon.
$$
By Step 4,
\begin{align*}
\sqrt{F(v)} \vert V_{1}(v)-V_{2}(v)\vert&\le C
\left(
\left\|(V_{1}-V_{2})\sqrt F\right\|_{L^\infty(v_{0},U)}
\int_{v_{0}}^{U}\frac{ds}{\sqrt F}+1
\right)\\
&\le C\varepsilon \left\|(V_{1}-V_{2})\sqrt F\right\|_{L^\infty(v_{0},U)} + C.
\end{align*}
This being true for all $v\in(v_{0},U)$, we deduce that
$$
(1-C\varepsilon)\left\|(V_{1}-V_{2})\sqrt F\right\|_{L^\infty(v_{0},U)}\le C
$$
 By taking $\varepsilon<1/(2C)$ and letting $U\to+\infty$, we obtain \eqref{step5}.

\noindent{\bf Step 6.} End of proof.
For fixed $t>0$, let $v_{1}=v_{1}(t)$ and $v_{2}=v_{2}(t)$. By \eq{neweq} we have the identity
\begin{equation}\label{ld1}
t=\int_{v_1}^{+\infty} \frac{ds}{V_1(s)}=\int_{v_2}^{+\infty} \frac{ds}{V_2(s)}.
\end{equation}
Assume without losing any generality that $v_1\leq v_2$. We infer from \eqref{ld1} that
$$
\int_{v_1}^{v_2} \frac{ds}{V_1(s)}=\int_{v_2}^{+\infty} \left(\frac{1}{V_2(s)}-\frac{1}{V_1(s)}\right)ds.
$$
Using \eqref{MariusHopital} and \eqref{step5},  we find
\begin{equation}\label{ld3}
\frac{v_2-v_1}{\sqrt{F(v_2)}}\leq C \int_{v_2}^{+\infty} \frac{|V_2(s)-V_1(s)|}{F(s)}ds\leq
\frac{C}{\sqrt{F(v_2)}} \int_{v_2}^{+\infty} \frac{1}{F(s)}ds.
\end{equation}
This yields $v_2(t)-v_1(t)=o(1)$ as $t\ri 0^+$, as desired.
\hfill\qed

\medskip

\subsection{Proof of Theorem \ref{ELSconvex}} Let $\tilde u$ be the minimal ELS solution of \eq{els} and let $u$ be any ELS solution of \eq{els}. By our assumptions we can find a sequence of smooth domains $\{\Omega_k\}$ such that
\begin{enumerate}
\item[(a)] $\Omega_k\subset\subset\Omega_{k+1}$ for all $k\geq 1$;

\item[(b)] $\tilde u\geq M$  in $\RR^D\setminus\Omega_1$ where $M>0$ is the constant from (ii);

\item[(c)] $\rho$ is constant on each $\partial\Omega_k$ and $\sqrt{\rho}$ is superharmonic in $\RR^D\setminus\Omega_1$;
\item[(d)] for each $k\geq 1$ the mean curvature $H_k$ of $\partial\Omega_k$ satisfies
\begin{equation}\label{mck}
2(D-1)H_k\geq\frac{|\nabla\rho|}{\rho}\mbox{ on }\partial\Omega_k.
\end{equation}
\end{enumerate}
For all $k\geq 2$ consider the problem
\begin{equation}\label{problk}
\left\{
\begin{aligned}
\Delta u_k&=\rho(x)f(u_k) &&\quad\mbox{ in }\Omega_k\setminus\Omega_1, \\
u_k&=\inf_{\partial\Omega_k}\tilde u&&\quad\mbox{ on }\partial\Omega_k,\\
u_k&=u &&\quad\mbox{ on }\partial\Omega_1.
\end{aligned}
\right.
\end{equation}
Then $u$ is a supersolution of \eq{problk} while for any $\beta<\inf_{\partial\Omega_k} \tilde u$ we have that $w_\beta$ defined by \eq{beta} is a subsolution of \eq{problk}. Hence \eq{problk} has a smooth solution $u_k$ satisfying
$$
w_\beta\leq u_k\leq u\quad\mbox{ in }\Omega_k\setminus\Omega_1\quad\mbox{ for all }k\geq 2.
$$
Furthermore, taking a subsequence if necessary, we can pass to the limit as $k\rightarrow +\infty$ in \eq{problk} to derive that $u_\infty:=\lim_{k\rightarrow+\infty}u_k$ satisfies
\begin{equation}\label{problinf}
\left\{
\begin{aligned}
\Delta u_\infty&=\rho(x)f(u_\infty) &&\quad\mbox{ in }\RR^D\setminus\Omega_1, \\
u_\infty(x)&\rightarrow+\infty&&\quad\mbox{ as }|x|\rightarrow +\infty,\\
u_\infty&\leq  u&&\quad\mbox{ in }\RR^D\setminus\Omega_1,\\
u_\infty&=u &&\quad\mbox{ on }\partial\Omega_1.
\end{aligned}
\right.
\end{equation}
We shall next divide our proof into three steps.

\medskip

\noindent{\bf Step 1.} There exists a positive constant $C>0$ such that
\begin{equation}\label{scaef}
\frac{|\nabla u_\infty|^2}{\rho(x)}-2(F(u_\infty)+C)\leq 0\quad\mbox{ in }\RR^D\setminus\Omega_1.
\end{equation}

We first apply Theorem C.1.  in Appendix C for $u=u_k$ on $\Omega=\Omega_k\setminus\Omega_1$. Thus, the function
$$
P_k:=\frac{|\nabla u_k|^2}{\rho(x)}-2F(u_k)
$$
achieves its maximum either on $\partial\Omega_1$ or at critical points of $u_k$.
By elliptic regularity, $\{u_k\}$ is uniformly bounded in $C^1(\Omega_{2}\setminus\Omega_{1})$, so there exists a positive constant $C>0$
which is independent of $k$ such that
$$
\| P_{k}\|_{L^\infty(\partial\Omega_{1})} \le 2C.
$$
It follows that
$$
\frac{|\nabla u_k|^2}{\rho(x)}-2(F(u_k)+C)\leq 0\quad\mbox{ in }\Omega_k\setminus\Omega_1,
$$
for all $k\geq 2$. Passing to the limit with $k\rightarrow +\infty$ in the above estimate we obtain \eq{scaef}.

\medskip

\noindent{\bf Step 2.} $u=u_\infty$ on $\RR^D\setminus\Omega_1$.

We already know (see \eq{problinf}) that $u_\infty\leq u$ in $\RR^D\setminus\Omega_1$. For the converse inequality let $C>0$ be the constant from \eq{scaef} and set
$$
v=\int_u^{+\infty}\frac{dt}{\sqrt{2(F(t)+C)}}\,,\quad
v_\infty=\int_{u_\infty}^{+\infty}\frac{dt}{\sqrt{2(F(t)+C)}}.
$$
Then $w:=v-v_\infty$ satisfies
\begin{equation}\label{eqw}
\begin{aligned}
-\Delta w=&\left\{\frac{f(u)}{\sqrt{2(F(u)+C)}}-\frac{f(u_\infty)}{\sqrt{2(F(u_\infty)+C)}}\right\}
(\rho(x)-|\nabla v_\infty|^2)\\
&+\frac{f(u)}{\sqrt{2(F(u)+C)}}(|\nabla v_\infty|^2-|\nabla v|^2)\quad\mbox{ in }\RR^D\setminus\Omega_1.
\end{aligned}
\end{equation}
Since $\sqrt F$ is convex on $[M,+\infty)$ it easily follows that $\frac{f}{\sqrt{2(F+C)}}$ is increasing on $[M,+\infty)$. Also by \eqref{scaef}, we have
$$
\begin{aligned}
\rho(x)-|\nabla v_\infty|^2&=\rho(x)-\frac{|\nabla u_\infty|^2}{2(F(u_\infty)+C)}\\&=-
\frac{\rho(x)}{2(F(u_\infty)+C)}\left\{\frac{|\nabla u_\infty|}{\rho(x)}-2(F(u_\infty)+C)\right\}\\
&\geq 0 \quad\mbox{ in }\RR^D\setminus\Omega_1.
\end{aligned}
$$
Thus, from \eq{eqw} we deduce
$$
-\Delta w\geq \frac{f(u)}{\sqrt{2(F(u)+C)}}(|\nabla v_\infty|^2-|\nabla v|^2)\quad\mbox{ in }\RR^D\setminus\Omega_1.
$$
Let now
$$
b(x):=\frac{f(u)}{\sqrt{2(F(u)+C)}}\nabla(v_\infty+v).
$$
Then $w$ satisfies
$$
\left\{
\begin{aligned}
-&\Delta w+b(x)w\geq 0  &&\quad\mbox{ in }\RR^D\setminus\Omega_1, \\
&w(x)\rightarrow 0&&\quad\mbox{ as }|x|\rightarrow \infty,\\
&w=0&&\quad\mbox{ on }\partial\Omega_1.
\end{aligned}
\right.
$$
By the maximum principle we derive $w\geq 0$ in $\RR^D\setminus\Omega_1$ so $u\leq u_\infty$ in
$\RR^D\setminus\Omega_1$, which finally yields $u\equiv u_\infty$ in $\RR^D\setminus\Omega_1$.

\medskip

\noindent{\bf Step 3.}  There exists a unique ELS of \eq{els}.

Let $\tilde u$ be the minimal ELS solution of \eq{els} and let $u$ be any ELS solution of \eq{els}. Also denote by $\tilde u_k$ and $u_k$ the solutions of \eq{problk} corresponding to $\tilde u$ and $u$ respectively. Then, for all $k\geq 2$,  $w_k:=u_k-\tilde u_k$ satisfies
$$
\left\{
\begin{aligned}
\Delta w_k&=\rho(x)[f(u_k)-f(\tilde u_k)] \ge0&&\quad\mbox{ in }\Omega_k\setminus\Omega_1,\\
w_k&=u-\tilde u\geq 0&&\quad\mbox{ on } \partial\Omega_1,\\
w_k&=0&&\quad\mbox{ on } \partial\Omega_k.
\end{aligned}
\right.
$$
By the maximum principle it follows that
$$
w_k\leq \sup_{\partial\Omega_1}(u-\tilde u)\quad\mbox{ in }\Omega_k\setminus\Omega_1,
$$
and the equality is achieved for some $\xi_k\in\partial\Omega_1$.
Passing to the limit with $k\rightarrow +\infty$ we find that $w_\infty=u_\infty-\tilde u_\infty$ satisfies
$$
w_\infty\leq \sup_{\partial\Omega_1}(u-\tilde u)\quad\mbox{ in }\RR^D\setminus \Omega_1,
$$
and the equality holds at some point $\xi\in \partial\Omega_1$. Since $w_{\infty}$ is subharmonic in $\Omega_{1}$, the above inequality also holds in $\Omega_1$. By the strong maximum principle we deduce $w_{\infty}\equiv w_{\infty}(\xi)=c\geq 0$. Thus $u\equiv\tilde u+c$ and using the fact that both $u$ and $\tilde u$ are ELS to \eq{els} we find $c=0$, that is, $u\equiv \tilde u$. This finishes our proof.
\hfill\qed

\medskip

\subsection{Proof of Theorem \ref{coro3}}
Let $u_{\infty}$ be the minimal ELS of \eq{els}. Since $\rho$ is radial, so is $u_{\infty}$.
Thus, $u_{\infty}$ satisfies
$$
(r^{D-1} u'_{\infty})'=r^{D-1}\rho(r)f(u_{\infty})\quad\mbox{ for all }r\geq 0.
$$
We multiply by $2r^{D-1} u'_{\infty}$ and integrate over $[R,r]$. We find
$$\begin{aligned}
r^{2D-2}(u'_{\infty})^2(r)-R^{2D-2}(u'_{\infty})^2(R)&=2\int_R^r t^{2D-2}\rho(t) f(u_{\infty})( u'_{\infty}) dt\\
&\leq 2r^{2D-2}\rho(r)F(u_{\infty}).
\end{aligned}
$$
Hence, letting $C_{R}=R^{2D-2}(u'_{\infty})^2(R)$,
$$
\frac{(u'_{\infty})^2}{\rho} \le 2F(u_{\infty}) + \frac{C_{R}}{r^{2D-2}\rho}\le 2(F(u_{\infty}) + C).
$$
That is, \eqref{scaef} holds in $\R^D\setminus B_{R}$.
Let now $u$ be an arbitrary ELS of \eq{els} and proceed as in Step 2 of Theorem \ref{ELSconvex}.
\hfill\qed.

\appendix

\section{Minimality Principle}

Basic to our analysis is the following result, the proof of which is a straightforward generalization of that in \cite[Section 2]{ddgr}.

\noindent{\bf Proposition A.1.} {\rm (Minimality Principle)} {\it
Let $\Omega$ be a smooth and bounded domain of $\mathbb{R}^D$, $f\in C^1(\R)$, $\rho\in C^1(\super\Omega)$ and $g\in
C(\partial\Omega)$. Assume there exists $\sub u, \super u\in C^2(\bom)$ such that $\sub u\le \super u$ in $\o$ and
\begin{equation}\label{solution}
\left\{
\begin{aligned}
\Delta \sub u&\ge \rho(x)f(\sub u)\;\;\text{(resp. $\Delta \super u\le \rho(x)f(\super u)$)} &&\quad\mbox{ in }\Omega, \\
\sub u&\le g \;\text{(resp. $\super u\ge g$ )}&&\quad\mbox{ on } \partial\Omega.
\end{aligned}
\right.
\end{equation}
Then, there exists a unique solution $u\in C^2(\bom)$ to
\begin{equation}\label{dirichlet}
\left\{
\begin{aligned}
\Delta  u&= \rho(x)f(u)&&\quad\mbox{ in }\Omega, \\
u& = g &&\quad\mbox{ on } \partial\Omega,
\end{aligned}
\right.
\end{equation}
such that $\sub u\le u$ and $u|_\omega \le \super v$ for any open subset
$\omega$ of $\om$ and any function $\super v\in C^2(\bar\omega)$
satisfying
\begin{equation}\label{localsupersolution}
\left\{
\begin{aligned}
\Delta  \super v &\le \rho(x)f(\super v) &&\quad\mbox{ in } \omega, \\
\super v&\ge \sub u &&\quad\mbox{ in } \omega,\\
\super v&\ge u &&\quad\mbox{ on } \partial\omega.
\end{aligned}
\right.
\end{equation}
We call $u$ the minimal solution to \eqref{dirichlet} relative to
$\sub u$.}

\section{On Poisson's equation}
We collect here some basic results on Poisson's equation, the proof of which can be found in Appendix A of \cite{bk}.

\noindent{\bf Lemma B.1.} {\rm (see \cite{bk})} {\it Let $D\ge3$, let $c_{D}\vert x\vert^{2-D}$ be the fundamental solution of the Laplace operator, and let $h\in L^\infty_{loc}(\R^D)$, $h\ge0$ a.e. There exists a bounded solution to
\begin{equation} \label{poisson}
-\Delta U = h\qquad\text{in $\R^D$}
\end{equation}
if and only if $u_{\star}:=c_{D}\vert x\vert^{2-D}\star h\in L^\infty(\R^D)$. Furthermore, $u_{\star}$ is the minimal positive solution to \eqref{poisson}. }

\noindent {\bf Lemma B.2.} {\rm (see \cite{bk})} {\it Make the same assumptions as above. Then,
$$
\liminf_{\vert x\vert\to+\infty}u_{\star}(x)=
\lim_{R\to+\infty} \fint_{\partial B_{R}(0)} u_{\star}\;d\sigma=0.
$$}

\section{Maximum values for functionals related to nonlinear Dirichlet problems}

The main result in this section is a reformulation of \cite[Theorems 1-2]{schaefer} which applies to our setting. For the reader's convenience we have included here a complete proof.

\noindent {\bf Theorem C.1.} {\it
Let $\Omega\subset\RR^D$ be a bounded domain with $C^3$ boundary and $u\in C^2(\overline\Omega)$ be such that
$$
\left\{
\begin{aligned}
\Delta  u&= \rho(x)f(u)&&\quad\mbox{ in }\Omega, \\
u& = c\geq 0 &&\quad\mbox{ on } \partial\Omega,
\end{aligned}\right.
$$
where
\begin{enumerate}
\item[(i)] $f\in C^1[0,\infty)$, $f\geq 0$;
\item[(ii)] $\rho>0$, $\rho\in C^2(\overline\Omega)$, $\left.\rho\right\vert_{\partial\Omega}$ is constant, and $\sqrt{\rho}$ is superharmonic in $\Omega$;
\end{enumerate}
Consider the functional
$$
P=\frac{|\nabla  u|^2}{\rho(x)}-2F( u)\,,
$$
and let $x_{0}$ be a maximum point of $P$. Then, either $x_{0}$ is a critical point of $u$ or $x_{0}\in\partial\Omega$ and
\begin{equation}\label{meanh}
2(D-1)H<\frac{|\nabla \rho|}{\rho}\quad\mbox{ at }x_{0},
\end{equation}
where $H$ is the mean curvature of $\partial\Omega$ computed at $x_{0}$.}
\begin{proof} We perform the proof along two steps.

\noindent{\bf Step 1.} $P$ achieves its maximum either at a critical point of $u$ or at a point on the boundary $\partial\Omega$.

It suffices to show that
$$
\Delta P+L\cdot \nabla P\geq 0\quad\mbox{ in }\Omega_0,
$$
for some smooth vector field $L$ defined in $\Omega_0:=\{x\in\Omega:\nabla u(x)\neq 0\}$.
Remark that for any $1\leq j\leq D$ we have
\begin{equation}\label{sch1}
P_j=\frac{2}{\rho}\sum_{i=1}^D u_iu_{ij}-\frac{|\nabla u|^2\rho_j}{\rho^2}-2f(u)u_j.
\end{equation}
So,
\begin{equation}\label{sch2}
\begin{aligned}
\Delta P=&\frac{2}{\rho}\sum_{i,j=1}^D u^2_{ij}+\frac{2}{\rho}\sum_{i,j=1}^D u_iu_{ijj}
-\frac{4}{\rho^2}\sum_{i,j=1}^D u_iu_{ij}\rho_j-\frac{|\nabla u|^2\Delta\rho}{\rho^2}\\
&+\frac{2|\nabla u|^2|\nabla\rho|^2}{\rho^3}-2f'(u)|\nabla u|^2-2f(u)\Delta u.
\end{aligned}
\end{equation}
Let $Q=(Q_1,Q_2,\dots, Q_D)$, where
$$
-Q_j:=\frac{2}{\rho}\sum_{i=1}^D u_iu_{ij}+\frac{|\nabla u|^2}{\rho^2}\rho_j+2f(u)u_j,\;1\leq j\leq D.
$$
From \eq{sch1} we have
$$
\frac{4}{\rho^2}\left(\sum_{i=1}^D u_iu_{ij}\right)^2=-P_jQ_j+
\left(\frac{|\nabla u|^2}{\rho^2}\rho_j+2f(u)u_j \right)^2.
$$
By the Cauchy-Schwarz inequality the above relation yields
$$
\frac{4|\nabla u|^2}{\rho^2}\sum_{i=1}^D u^2_{ij}\geq -P_jQ_j+
\frac{|\nabla u|^4}{\rho^4}\rho_j^2+4\frac{|\nabla u|^2f(u)}{\rho^2}\rho_ju_j +4f^2(u)u^2_j,
$$
and so
$$
\frac{2}{\rho}\sum_{i,j=1}^D u^2_{ij}\geq \sum_{j=1}^D \frac{-Q_j\rho}{2|\nabla u|^2}P_j+
\frac{|\nabla u|^2|\nabla \rho|^2}{2\rho^3}+2\frac{f(u)}{\rho}\nabla u\cdot\nabla\rho +2f^2(u)\rho.
$$
Using this last estimate in \eq{sch2} we find
\begin{equation}\label{sch3}
\begin{aligned}
\Delta P+T\cdot \nabla P\geq &|\nabla u|^2\left(\frac{|\nabla\rho|^2}{2\rho^3}-\frac{\Delta\rho}{\rho^2}\right)+\frac{2|\nabla u|^2|\nabla\rho|^2}{\rho^3}-2f'(u)|\nabla u|^2 \\
&+\frac{2}{\rho}\sum_{i,j=1}^D u_iu_{ijj}-\frac{4}{\rho^2}\sum_{i,j=1}^D u_iu_{ij}\rho_j+2\frac{f(u)}{\rho}\nabla u\cdot\nabla\rho,
\end{aligned}
\end{equation}
where $T=\frac{\rho}{2|\nabla u|^2} Q$.
Since $\Delta u=\rho(x)f(u)$, by differentiation we have
\begin{equation}\label{sch4}
\frac{2}{\rho}\sum_{i,j=1}^D u_iu_{ijj}=\frac{2f(u)}{\rho}\nabla u\cdot\nabla \rho+2f'(u)|\nabla u|^2.
\end{equation}
Also from \eq{sch1} we have
\begin{equation}\label{sch5}
\frac{4}{\rho^2}\sum_{i,j=1}^D u_iu_{ij}\rho_j=\frac{2}{\rho}\sum_{j=1}^D P_j\rho_j+\frac{2|\nabla u|^2|\nabla\rho|^2}{\rho^3}+\frac{4f(u)}{\rho} \nabla u\cdot\nabla\rho.
\end{equation}
Let now $L=T+\frac{2}{\rho}\nabla \rho$. Combining \eq{sch3}-\eq{sch5} we obtain
$$\Delta P+L\cdot \nabla P\geq \frac{2|\nabla u|^2}{\rho\sqrt{\rho}}(-\Delta\sqrt\rho)\geq 0\quad\mbox{ in }\Omega_0.
$$

\medskip

\noindent{\bf Step 2.} If $P$ achieves its maximum at $x_{0}\in\partial\Omega$, then \eqref{meanh} holds.

Since $\rho$ is constant on $\partial\Omega$ and $\rho\ge \left.\rho\right\vert_{\partial\Omega}$, the outer unit normal $n$ to $\partial\Omega$ is given by $n=-\nabla\rho/|\nabla\rho|$ and
$$
\frac{\partial\rho}{\partial n}=-|\nabla \rho|\quad\mbox{ on }\partial\Omega.
$$
Since $u$ is constant on $\partial\Omega$, $\left\vert\frac{\partial u}{\partial n}\right\vert=\vert\nabla u\vert$ on $\partial\Omega$ and so
\begin{equation}\label{cv6}
\begin{aligned}
\frac{\partial P}{\partial n}=&\frac{\partial u}{\partial n}\left(\frac{2}{\rho}\frac{\partial^2 u}{\partial n^2}-2f(u)\right)+\frac{\left|\frac{\partial u}{\partial n}\right|^2|\nabla\rho|}{\rho^2}\quad\mbox{ on }\partial\Omega.
\end{aligned}
\end{equation}
On the other hand, since $u$ is constant on $\partial\Omega$, we have
$$
\rho f(u)=\Delta u=\frac{\partial^2 u}{\partial n^2}+(D-1)H \frac{\partial u}{\partial n},
$$
that is,
  $$
\frac{2}{\rho}\frac{\partial^2 u}{\partial n^2}=2f(u)-2(D-1)H\frac{\frac{\partial u}{\partial n}}{\rho}\quad\mbox{ on }\partial\Omega.
$$
Using this last equality in \eq{cv6}, we find
\begin{align*}
\frac{\partial P}{\partial n}&-\frac{|\frac{\partial u}{\partial n}|^2}{\rho}\left(2(D-1)H-\frac{|\nabla \rho|}{\rho} \right)\\
&=-\frac{|\nabla u|^2}{\rho}\left(2(D-1)H-\frac{|\nabla \rho|}{\rho} \right)  \quad\mbox{ at }x_{0}.\\
\end{align*}
The Hopf maximum principle then implies \eqref{meanh}.
\end{proof}

\addcontentsline{toc}{section}{Acknowledgements}

\noindent{\bf Acknowledgements.}  The authors would like to thank the referee for pointing us the paper \cite{yz} and for pertinent comments which led to an improved version of our paper. This work has been supported by The French Embassy in Ireland and by the joint EGIDE-IRCSET Programme between France and Ireland "Boundary blow-up solutions for partial differential equations and systems".

\bibliographystyle{amsalpha}

\begin{bibdiv}
\begin{biblist}

\bib{bvg}{article}{
   author={Bidaut-Veron, M-F.},
   author={Grillot, P.},
   title={Asymptotic behaviour of the solutions of sublinear elliptic
   equations with a potential},
   journal={Appl. Anal.},
   volume={70},
   date={1999},
   number={3-4},
   pages={233--258}

}

\bib{bk}{article}{
   author={Brezis, H.},
   author={Kamin, S.},
   title={Sublinear elliptic equations in ${\bf R}^n$},
   journal={Manuscripta Math.},
   volume={74},
   date={1992},
   number={1},
   pages={87--106}
   
}

\bib{cli}{article}{
author={K-S. Cheng},
author={J-T. Li},
title={On the elliptic equations $\Delta u=K(x) u^\sigma$ and $\Delta u=K(x) e^{2u}$},
journal={Trans. Amer. Math. Soc.},
volume={304},
date={1987},
pages={639--668}
}

\bib{cheng-ni}{article}{
   author={Cheng, K-S.},
   author={Ni, W-M.},
   title={On the structure of the conformal scalar curvature equation on
   ${\bf R}^n$},
   journal={Indiana Univ. Math. J.},
   volume={41},
   date={1992},
   number={1},
   pages={261--278}
}

\bib{cd}{article}{
   author={Costin, O.},
   author={Dupaigne, L.},
   title={Boundary blow-up solutions in the unit ball: asymptotics,
   uniqueness and symmetry},
   journal={J. Differential Equations},
   volume={249},
   date={2010},
   number={4},
   pages={931--964}
}

\bib{ddgr}{article}{
   author={Dumont, S.},
   author={Dupaigne, L.},
   author={Goubet, O.},
   author={R{\u{a}}dulescu, V.},
   title={Back to the Keller-Osserman condition for boundary blow-up
   solutions},
   journal={Adv. Nonlinear Stud.},
   volume={7},
   date={2007},
   number={2},
   pages={271--298}
}

\bib{dynkin}{article}{
   author={Dynkin, E.B.},
   title={A probabilistic approach to one class of nonlinear differential
   equations},
   journal={Probab. Theory Related Fields},
   volume={89},
   date={1991},
   number={1},
   pages={89--115}
}

\bib{d1}{article}{
   author={Dupaigne, L.},
   title={Sym\'etrie : si, mais seulement si ?},
   journal={Symmetry for elliptic PDEs, AMS Contemporary Mathematics},
   volume={528},
   date={2010},
}


\bib{ep}{article}{
   author={Engl{\"a}nder, J.},
   author={Pinsky, R.G.},
   title={On the construction and support properties of measure-valued
   diffusions on $D\subseteq{\bf R}^d$ with spatially dependent
   branching},
   journal={Ann. Probab.},
   volume={27},
   date={1999},
   number={2},
   pages={684--730}
}

\bib{gnn}{article}{
   author={Gidas, B.},
   author={Ni, W-M.},
   author={Nirenberg, L.},
   title={Symmetry and related properties via the maximum principle},
   journal={Comm. Math. Phys.},
   volume={68},
   date={1979},
   number={3},
   pages={209--243}
}

\bib{gt}{book}{
author={Gilbarg, D.},
author={Trudinger, N.},
title={Elliptic Partial Differential Equations of Second Order},
series={Springer-Verlag, Berlin},
year={1993},
}

\bib{hebey}{article}{
   author={Hebey, E.},
   title={Variational methods and elliptic equations in Riemannian geometry},
   journal={Notes from lectures given at ICTP},
   date={2003},
   review={ http://www.u-cergy.fr/rech/pages/hebey/NotesICTP.pdf},
}

\bib{K}{article}{
   author={Keller, J.B.},
   title={On solutions of $\Delta u=f(u)$},
   journal={Comm. Pure Appl. Math.},
   volume={10},
   date={1957},
   pages={503--510}
}

\bib{lair}{article}{
   author={Lair, A.V.},
   title={A necessary and sufficient condition for existence of large
   solutions to semilinear elliptic equations},
   journal={J. Math. Anal. Appl.},
   volume={240},
   date={1999},
   number={1},
   pages={205--218}
}

\bib{legall}{article}{
   author={Le Gall, J-F.},
   title={Probabilistic approach to a class of semilinear partial
   differential equations},
   conference={
      title={Perspectives in nonlinear partial differential equations},
   },
   book={
      series={Contemp. Math.},
      volume={446},
      publisher={Amer. Math. Soc.},
      place={Providence, RI},
   },
   date={2007},
   pages={255--272}
}

\bib{li-ni}{article}{
   author={Li, Y.},
   author={Ni, W-M.},
   title={On conformal scalar curvature equations in ${\bf R}^n$},
   journal={Duke Math. J.},
   volume={57},
   date={1988},
   number={3},
   pages={895--924}
}

\bib{ni-li}{article}{
   author={Li, Y.},
   author={Ni, W-M.},
   title={Radial symmetry of positive solutions of nonlinear elliptic equations in $\R^n$},
   journal={Commun. Part. Di. Eq.},
   volume={18},
   date={1993},
   pages={1043-1054},
}

\bib{ni}{article}{
   author={Ni, W-M.},
   title={On the elliptic equation $\Delta u+K(x)u^{(n+2)/(n-2)}=0$, its
   generalizations, and applications in geometry},
   journal={Indiana Univ. Math. J.},
   volume={31},
   date={1982},
   number={4},
   pages={493--529}
}

\bib{olofsson}{article}{
   author={Olofsson, A.},
   title={Apriori estimates of Osserman-Keller type},
   journal={Differential Integral Equations},
   volume={16},
   date={2003},
   number={6},
   pages={737--756}
}

\bib{O}{article}{
   author={Osserman, R.},
   title={On the inequality $\Delta u\geq f(u)$},
   journal={Pacific J. Math.},
   volume={7},
   date={1957},
   pages={1641--1647}
}

\bib{ren}{article}{
   author={Ren, Y-X.},
   title={Support properties of super-Brownian motions with spatially
   dependent branching rate},
   journal={Stochastic Process. Appl.},
   volume={110},
   date={2004},
   number={1},
   pages={19--44}
}
	
\bib{schaefer}{article}{
   author={Schaefer, P.W.},
   author={Sperb, R.},
   title={A maximum principle for a class of functionals in nonlinear Dirichlet problems},
   journal={Ordinary and partial differential equations, Lecture Notes in Math.},
   volume={564},
   date={1976},
   pages={400-406}
}

\bib{taliaferro}{article}{
   author={Taliaferro, S.D.},
   title={Radial symmetry of large solutions of nonlinear elliptic
   equations},
   journal={Proc. Amer. Math. Soc.},
   volume={124},
   date={1996},   
   pages={1043-1054},
   number={2},
   pages={447--455}
}

\bib{taliaferro2}{article}{
   author={Taliaferro, S.D.},
   title={Are solutions of almost radial nonlinear elliptic equations almost
   radial?},
   journal={Comm. Partial Differential Equations},
   volume={20},
   date={1995},
   number={11-12},
   pages={2057--2092}
}

\bib{yamabe}{article}{
   author={Yamabe, H.},
   title={On a deformation of Riemannian structures on compact manifolds},
   journal={Osaka Math. J.},
   volume={12},
   date={1960},
   pages={21--37}
}

\bib{yz}{article}{
   author={Ye, D.},
   author={Zhou, F.},
   title={Invariant criteria for existence of bounded positive solutions},
   journal={Discrete Contin. Dyn. Syst.},
   volume={12},
   date={2005},
   number={3},
   pages={413--424}
}

\bib{dong}{article}{
   author={Ye, D.},
   author={Zhou, F.},
   title={Existence and nonexistence of entire large solutions for some
   semilinear elliptic equations},
   journal={J. Partial Differential Equations},
   volume={21},
   date={2008},
   number={3},
   pages={253--262}
}

\end{biblist}
\end{bibdiv}

\end{document}